\def\opn#1#2{\def#1{\operatorname{#2}}} 
\opn\Ass{Ass}
\opn\G{G}
\opn\initial{in}
\title{\normalsize\bf%
\uppercase{Symbolic Powers and Symbolic Rees Algebras of Binomial Edge Ideals of Some Classes of Block Graphs}
}
\author{%
I. Jahani \ , \ Sh. Bayati\footnote{Corresponding Author} \ and \ F. Rahmati}
\newcommand{\Addresses}{{
  \bigskip
  \footnotesize

  \textsc{Department of Mathematics and Computer Science, Amirkabir University of Technology (Tehran Polytechnic), Iran}\par\nopagebreak
  \medskip \noindent
  \textit{E-mail address}: \texttt{imanjahani@aut.ac.ir}\\
  \textit{E-mail address}: \texttt{shamilabayati@gmail.com}\\
  \textit{E-mail address}: \texttt{frahmati@aut.ac.ir}

}}
\begin{document}

\date{}

\maketitle

\vspace{-0.5cm}

\bigskip
\noindent
{\small{\bf ABSTRACT.}
In this paper, we investigate some properties of symbolic powers and symbolic Rees algebras of binomial edge ideals associated with some classes of block graphs. First, it is shown that symbolic powers of binomial edge ideals of pendant cliques graphs coincide with the ordinary powers. Furthermore, we see that binomial edge ideals of a generalization of these graphs are symbolic $F$-split. Consequently, net-free generalized caterpillar graphs are also a class of block graphs with symbolic $F$-split binomial edge ideals. Finally, it turns out that symbolic Rees algebras of binomial edge ideals associated with these two classes, namely pendant cliques graphs and net-free generalized caterpillar graphs, are strongly $F$-regular.

\medskip
\noindent
{\small{\bf 2020 Mathematics Subject Classification}{:} {13A30, 13A35, 13C13, 13F20, 05E40.}}

\noindent
{\small{\bf Keywords}{:} 
Binomial edge ideal; Generalized caterpillar graph;  Pendant cliques graph; Strongly $F$-Regular; Symbolic $F$-Split; Symbolic power.
}

\baselineskip=\normalbaselineskip
\newtheorem{definition}{Definition}[section]
\newtheorem{theorem}[definition]{Theorem}
\newtheorem{proposition}[definition]{Proposition}
\newtheorem{corollary}[definition]{Corollary}
\newtheorem{remark}[definition]{Remark}
\newtheorem{lemma}[definition]{Lemma}
\newtheorem{example}[definition]{Example}
\newtheorem{setup}[definition]{Setup}

\section{Introduction}\label{sec:1}
Let $I$ be an ideal of a Noetherian ring with minimal prime ideals $\mathfrak{p}_1,\ldots,\mathfrak{p}_r$. The $m$-th symbolic power of $I$ is $I^{(m)}=\mathfrak{q}_1\cap\cdots\cap\mathfrak{q}_r$ where $\mathfrak{q}_i$ is the $\mathfrak{p}_i$-primary component of $I^m$. Having the same associated primes as $I$ makes symbolic powers more important than ordinary powers in the frame of geometry. In this paper, we study symbolic powers of binomial edge ideals. 

Lots of efforts have been made to find out when symbolic and ordinary powers of ideals coincide. In the combinatorial context, there is a rich literature regarding the coincidence  of these powers of monomial ideals; see \cite{Alilooee,bahiano,bayati2014squarefree,DupVil.Edge,FranHaMermin,herzog2008standard,SimisVasVil,trung2011equality} for some related results. However, less is known about the equality of these powers when we consider binomial ideals associated with combinatorial structures. Ene and Herzog show in \cite[Theorem 3.3]{ene2018symbolic} that if $J_G$ is the binomial edge ideal of a simple connected graph $G$ with a squarefree initial ideal with respect to some monomial order, then the coincidence of ordinary and symbolic powers of that initial ideal implies the same property for $J_G$. Some known families of binomial edge ideals with equal ordinary and symbolic powers consist of Cohen-Macaulay binomial edge ideals of net-free block graphs \cite[Theorem 4.1]{ene2021powers},  binomial edge ideal of net-free generalized caterpillar graphs \cite[Theorem 3.11]{jahani2023equality}, and binomial edge ideal of complete multipartite graphs \cite[Theorem 4.3]{ohtani2013binomial}. In \cite{Taghipour}, a list of forbidden subgraphs is presented for the graphs whose parity binomial edge ideals have equal symbolic and ordinary powers. In Theorem~\ref{pc}, we show that pendant cliques graphs are another family of graphs whose binomial edge ideals have the same ordinary and symbolic powers.

Related to the problem of comparison of symbolic and ordinary powers, we next study the symbolic $F$-splitness property of binomial edge ideals. Symbolic $F$-split ideals are defined in \cite{msn} where it is shown there is a finite test for $F$-split ideals  which verifies whether their symbolic and ordinary powers coincide; see \cite[Theorem 4.7]{msn}.  So it is of interest to find out families of $F$-split ideals.
Moreover, symbolic Rees algebra and symbolic associated graded algebra associated with a symbolic $F$-split ideal are $F$-split \cite[Theorem 4.7]{msn}.   It is shown in \cite{ramirez2024symbolic} that the binomial edge ideals of the following graphs are symbolic $F$-split: complete multipartite graphs, caterpillar graphs, traceable graphs with unmixed binomial edge ideals, and those graphs whose binomial edge ideal has at most two associated primes. In this paper, we add one more family of graphs with symbolic $F$-split binomial edge ideals, namely generalized pendant cliques graphs; see Theorem~\ref{gpcfs}. As a result,  pendant cliques graphs  and  net-free generalized caterpillar graphs  are also $F$-split; see Corollary~\ref{cor.pcfs} and Corollary~\ref{cor.gcfs}. Our result is also a generalization of \cite[Theorem 4.6]{ramirez2024symbolic} about the $F$-splitness of caterpillar graphs.

Let $R$ denote a reduced Noetherian domain of positive prime
characteristic p. Consider  $R^{1/q}$ obtained by adjoining all $q$th roots of elements of R, regarded as an $R$-module in the
natural way. Suppose that   $R^{1/q}$                   is a finitely generated $R$-module. Let  $R^0$ denote the
complement of the union of the minimal primes of R . Strongly $F$-regularity is first defined by Hochster and Huneke in \cite{hochster1989tight} as follows: $R$  is strongly $F$-regular if for every   element $c\in R^0$,  there
exists $e\in \mathbb{N}$ such that the $R$-module homomorphism $R \to  R^{1/{p^e}}$ which sends $1$ to $c^{1/{p^e}}$ splits as a homomorphism of $R$-modules.  Some nice properties of strong $F$-regularity can be seen in   \cite[Theorem 3.1]{hochster1989tight}.  We refer to \cite{hochster} for a more detailed study regarding the theory of strongly $F$-regular rings. While strongly $F$-regularity has been widely studied, regarding the structures at the intersection between commutative algebra and combinatorics less is known about it. Strongly $F$-regularity of the Lov\'{a}sz-Saks-Schrijver ring associated with a  graph is studied in \cite{Tolo}.  It is shown in \cite{ramirez2024symbolic} that  symbolic Rees algebras associated with binomial edge ideals of the following families of graphs are strongly $F$-regular: complete multipartite graphs, closed graphs with unmixed binomial edge ideal, and graphs whose binomial edge ideal has at most two associated primes. We show the same result for pendant cliques graphs and net-free generalized caterpillar graphs; see Proposition~\ref{pendant.f-regular}  and  Proposition~\ref{gc.f-regular}.

\section{Preliminaries}
\label{1st}
In this section, we summarize some basic facts about  binomial edge ideals and symbolic $F$-splitness and fix some notations.

Throughout the paper, $G$ is a simple graph on the vertex set $V(G)$ with the edge set $E(G)$. Suppose that $v\in V(G)$. Then $G-v$ is the induced subgraph on $V(G)\setminus v$. If $v,w\in V(G)$, the distance between $v$ and $w$, denoted by $d(v,w) $, is the number of edges in a shortest path connecting them. 
By a \textbf{clique join on $G$ via an edge} $e=\{v_1,v_2\}\in E(G)$ we mean the graph obtained by attaching a complete graph $K_t$ to $G$ on $e$ for some $t\geq 3$, that is, a graph whose set of vertices is obtained by adding $t-2$  new vertices $u_1,\ldots,u_{t-2}$ to $V$, and $f$ is an edge  if $f$ is an edge of $G$ or $f=\{u_i,v_j\}$ for $i=1,\ldots ,{t-2}$ and $j=1,2$, or $f=\{u_i,u_j\}$ for $1\leq i<j\leq t-2$. Similarly, by a \textbf{clique join on $G$ via a vertex} $v\in V(G)$ we mean the graph obtained by attaching a complete graph $K_t$ to $G$ on $v$ for some $t\geq 2$, that is,  $t-1$ new vertices $u_1,\ldots,u_{t-1}$ are added to vertices and  edges $\{u_i,v\}$  for each $i=1,\ldots ,{t-1}$ and  $\{u_i,u_j\}$ for each $1\leq i <j \leq t-1$ are added to the set of edges. We denote these clique joins on $G$, respectively, by $G\sqcup_{e} K_t$ and $G\sqcup_{v} K_t$. A clique join of $K_2$ on $G$ via a vertex is called adding a \textbf{whisker} to $G$, and the newly added edge to $G$ is called a whisker.

A \textbf{generalized caterpillar graph} is a graph obtained by clique join of some complete graphs $K_{t_1},\ldots,K_{t_m}$ in succession on a path $P$ via pairwise distinct edges $e_1,\ldots,e_m$ of $P$ and finally, possibly adding a finite number of whiskers  to the vertices. A \textbf{pendant cliques graph} is a graph obtained by clique join of some complete graphs $K_{t_1},\ldots,K_{t_m}$ in succession on a path $P$ via pairwise distinct vertices  $v_1,\ldots,v_m$ of $P$. In this definition, if we allow an arbitrary finite number of clique joins via each vertex of $P$ and at most one clique join via each edge of $P$, then we call the obtained graph a \textbf{generalized pendant cliques graph}.  Figure \ref{fig:gencat},  Figure \ref{fig:pendantc}, and Figure~\ref{fig:genpen} are respectively examples of a generalized caterpillar graph, a pendant cliques graph, and a generalized pendant cliques graph. In these three definitions,  there might be more than one path which the conditions of the definition hold true for them. We call $P$  a \textbf{central path}  if among different  possible choices, the path $P$ is a longest one. To give an example, while the generalized pendant cliques graph in Figure~\ref{fig:genpen} can be obtained by clique joins via vertices and edges of the path $P:c,d,e,f,g$, it is not a central path. In this graph, for example, $P':b,c,d,e,f,g$ and $P'':a,c,d,e,f,g$ are central paths.

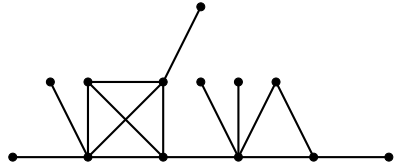
\begin{figure}[!h]
\centering
\begin{tikzpicture}
\draw[fill=black] (0,0) circle (1.5pt);
\draw[fill=black] (1,0) circle (1.5pt);
\draw[fill=black] (2,0) circle (1.5pt);
\draw[fill=black] (3,0) circle (1.5pt);
\draw[fill=black] (4,0) circle (1.5pt);
\draw[fill=black] (5,0) circle (1.5pt);
\draw[fill=black] (0.5,1) circle (1.5pt);
\draw[fill=black] (1,1) circle (1.5pt);
\draw[fill=black] (2,1) circle (1.5pt);
\draw[fill=black] (3,1) circle (1.5pt);
\draw[fill=black] (2.5,1) circle (1.5pt);
\draw[fill=black] (3.5,1) circle (1.5pt);
\draw[fill=black] (2.5,2) circle (1.5pt);
\draw[thick] (0,0) -- (1,0) -- (2,0) -- (3,0) -- (4,0) -- (5,0);
\draw[thick] (1,0) -- (0.5,1);
\draw[thick] (1,0) -- (1,1) -- (2,1) -- (2,0);
\draw[thick] (2,1) -- (2.5,2);
\draw[thick] (2,1) -- (1,0);
\draw[thick] (1,1) -- (2,0);
\draw[thick] (3,0) -- (2.5,1);
\draw[thick] (3,0) -- (3,1);
\draw[thick] (3,0) -- (3.5,1) -- (4,0);
\end{tikzpicture}
\caption{A generalized caterpillar graph}
\label{fig:gencat}
\end{figure}

\begin{figure}[!h]
\centering
\begin{tikzpicture}
\draw[fill=black] (0,0) circle (1.5pt);
\draw[fill=black] (1,0) circle (1.5pt);
\draw[fill=black] (2,0) circle (1.5pt);
\draw[fill=black] (3,0) circle (1.5pt);
\draw[fill=black] (4,0) circle (1.5pt);
\draw[fill=black] (5,0) circle (1.5pt);
\draw[fill=black] (-0.5,1) circle (1.5pt);
\draw[fill=black] (0.5,1) circle (1.5pt);
\draw[fill=black] (2,1) circle (1.5pt);
\draw[fill=black] (2.5,1) circle (1.5pt);
\draw[fill=black] (3.5,1) circle (1.5pt);
\draw[fill=black] (3,2) circle (1.5pt);
\draw[thick] (0,0) -- (1,0) -- (2,0) -- (3,0) -- (4,0) -- (5,0);
\draw[thick] (0,0) -- (-0.5,1);
\draw[thick] (0,0) -- (0.5,1);
\draw[thick] (-0.5,1) -- (0.5,1);
\draw[thick] (2,0) -- (2,1);
\draw[thick] (3,0) -- (2.5,1);
\draw[thick] (3,0) -- (3.5,1);
\draw[thick] (3,0) -- (3,2);
\draw[thick] (2.5,1) -- (3.5,1);
\draw[thick] (2.5,1) -- (3,2);
\draw[thick] (3.5,1) -- (3,2);
\end{tikzpicture}
\caption{A pendant cliques graph}
\label{fig:pendantc}
\end{figure}
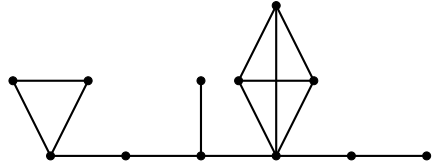

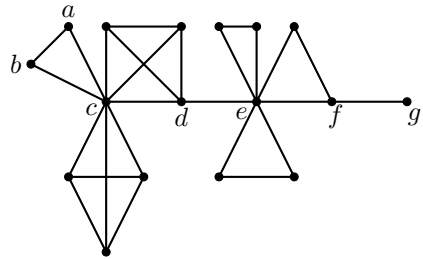
\begin{figure}[!h]
\centering
\begin{tikzpicture}
\draw[fill=black] (0,0.5) circle (1.5pt);
\draw[fill=black] (1,0) circle (1.5pt);
\draw[fill=black] (2,0) circle (1.5pt);
\draw[fill=black] (3,0) circle (1.5pt);
\draw[fill=black] (4,0) circle (1.5pt);
\draw[fill=black] (5,0) circle (1.5pt);
\draw[fill=black] (0.5,1) circle (1.5pt);
\draw[fill=black] (1,1) circle (1.5pt);
\draw[fill=black] (2,1) circle (1.5pt);
\draw[fill=black] (3,1) circle (1.5pt);
\draw[fill=black] (2.5,1) circle (1.5pt);
\draw[fill=black] (3.5,1) circle (1.5pt);

\draw[fill=black] (0.5,-1) circle (1.5pt);
\draw[fill=black] (1.5,-1) circle (1.5pt);
\draw[fill=black] (1,-2) circle (1.5pt);

\draw[fill=black] (2.5,-1) circle (1.5pt);
\draw[fill=black] (3.5,-1) circle (1.5pt);

\node at (0.5,1.2) {$a$};
\node at (-0.2,0.5) {$b$};
\node at (0.8,-0.1) {$c$};
\node at (2,-0.2) {$d$};
\node at (2.8,-0.15) {$e$};
\node at (4.05,-0.2) {$f$};
\node at (5.1,-0.2) {$g$};

\draw[thick] (0,0.5) -- (1,0) -- (2,0) -- (3,0) -- (4,0) -- (5,0);
\draw[thick] (1,0) -- (0.5,1);
\draw[thick] (0,0.5) -- (0.5,1);
\draw[thick] (1,0) -- (1,1) -- (2,1) -- (2,0);
\draw[thick] (2,1) -- (1,0);
\draw[thick] (1,1) -- (2,0);
\draw[thick] (3,0) -- (2.5,1) -- (3,1) -- (3,0);
\draw[thick] (3,0) -- (3.5,1) -- (4,0);

\draw[thick] (3,0) -- (2.5,-1) -- (3.5,-1) -- (3,0);
\draw[thick] (1,0) -- (0.5,-1) -- (1,-2) -- (1.5,-1) -- (1,0);
\draw[thick] (0.5,-1) -- (1.5,-1);
\draw[thick] (1,0) -- (1,-2);

\end{tikzpicture}
\caption{A generalized pendant cliques graph}
\label{fig:genpen}
\end{figure}

A graph $G$ is called a \textbf{net} if it is of the form of Figure \ref{fig:net}. We call a graph net-free if it does not have a net as an induced subgraph.
\begin{figure}[!h]
\centering
\begin{tikzpicture}
\draw[fill=black] (0.3,0.3) circle (1.5pt);
\draw[fill=black] (2.7,0.3) circle (1.5pt);
\draw[fill=black] (1,1) circle (1.5pt);
\draw[fill=black] (2,1) circle (1.5pt);
\draw[fill=black] (1.5,1.9) circle (1.5pt);
\draw[fill=black] (1.5,2.8) circle (1.5pt);
\draw[thick] (0.3,0.3) -- (1,1) -- (2,1) -- (2.7,0.3);
\draw[thick] (1,1) -- (1.5,1.9) -- (1.5,2.8);
\draw[thick] (1.5,1.9) -- (2,1);
\end{tikzpicture}
\caption{Net graph}
\label{fig:net}
\end{figure}
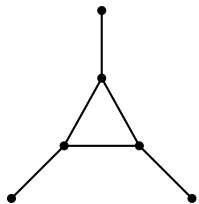

Consider a labeling $\sigma: V(G)\to [n]$ on the vertices of a graph $G$. The graph $G$ is called \textbf{closed} with respect to the  given labeling  if the following condition is satisfied: for all $\{i, j\}, \{k, l\} \in E(G)$ with $i < j$ and $k < l$, one has $\{j, l\} \in E(G)$ if $i = k$, and $\{i, k\} \in E(G)$ if $j = l$.
A simple graph $G$ is closed if there exists a labeling such that it is closed with respect to it.

\medskip

Let $\Delta$ be a simplicial complex on the vertex set [n]. A \textbf{cycle}  of length $s$ of $\Delta$ is an alternating sequence of distinct vertices and facets
\[
v_{1}, F_{1} , \ldots, v_{s} , F_{s} , v_{s+1} = v_{1}
\]
such that $v_{i} , v_{i+1} \in F_{i}$ for $i = 1, \ldots, s$. The cycle  is called \textbf{odd} (\textbf{even}) if $s$ is an odd (even) number. A cycle is \textbf{special} if it has no facet containing more than two vertices of the cycle.

Throughout the rest of this paper, suppose that $S = k[x_1,\ldots,x_n,y_1,\ldots,y_n]$ is the polynomial ring over a field $k$ with $2n$ variables. Associated with a monomial ideal $I \subseteq S$ with the minimal set of generators $\G(I)$, a simplicial complex $\Delta(I)$ is defined  whose facets are the sets
\begin{center}
$\{x_{i_1},\ldots ,x_{i_k}, y_{j_1},\ldots ,y_{j_l}\}$
\end{center}
with $x_{i_1}\ldots x_{i_k} y_{j_1}\ldots y_{i_l} \in \G(I)$.\\

Let $G$ be a graph on the vertex set $[n]$. The \textbf{binomial edge ideal} $J_{G}$ of $G$ is the ideal of $S$ generated by binomials $f_{i,j}=x_{i}y_{j} - y_{i}x_{j}$  which $\{i,j\}\in E(G)$. 
\begin{remark}\label{prime.description}
Let $G$ be a graph on the vertex set $[n]$.  For each subset $U\subseteq [n]$, a prime ideal $\mathfrak{p}_{U}(G)$ is defined in \cite{herzog2010binomial} as follows:
let $T=[n]\backslash U$, and  $G_{1},\ldots,G_{c(U)}$ be the connected components of the induced subgraph of $G$ on $T$. If for each $i$, $\tilde{G}_{i}$ is the complete graph on the vertex set $V(G_{i})$, then $\mathfrak{p}_U(G)$ is defined to be
\[
\mathfrak{p}_{U}(G) = (\{ x_{i},y_{i}\}_{i\in U} , J_{\tilde{G}_{1}} , \ldots , J_{\tilde{G}_{c(U)}}).
\]
\end{remark}
 By the following result, $\mathfrak{p}_{U}(G)$'s are exactly associated prime ideals of $J_G$.
\begin{theorem}{\em \cite[Theorem 3.2 and Corollary 3.9]{herzog2010binomial}}\label{binomial.primes}
Let $G$ be a graph on the vertex set $[n]$. Then 
\[
J_{G} = \underset{U\subset [n]}{\bigcap}\mathfrak{p}_{U}(G).
\]
Suppose, in addition, $G$ is connected. Then $\mathfrak{p}_{U}(G)$ is a minimal prime ideal of $J_G$ if and only if $U=\emptyset$, or $U\neq\emptyset$ and   $c(U \setminus \{i\}) < c(U)$ for each
$i\in U$.
\end{theorem}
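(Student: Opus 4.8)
\emph{Structure of the argument.} The statement comprises two parts, the intersection formula $J_G=\bigcap_{U\subseteq[n]}\mathfrak{p}_U(G)$ and, for connected $G$, the combinatorial description of the minimal primes; I would treat them in this order. To begin, I would verify that each $\mathfrak{p}_U(G)$ is prime. With the notation of Remark~\ref{prime.description}, it is the sum of $(x_i,y_i\colon i\in U)$ with the ideals $J_{\tilde G_1},\dots,J_{\tilde G_{c(U)}}$, and each $J_{\tilde G_\ell}$ is the ideal of $2\times 2$ minors of a generic $2\times |V(G_\ell)|$ matrix, hence a prime (indeed geometrically integral) determinantal ideal; as these summands involve pairwise disjoint sets of variables, their sum remains prime. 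The inclusion $J_G\subseteq\mathfrak{p}_U(G)$ is then checked on generators: for $\{i,j\}\in E(G)$, either an endpoint lies in $U$, whence $f_{i,j}\in(x_i,y_i,x_j,y_j)\subseteq\mathfrak{p}_U(G)$, or both endpoints lie in $T=[n]\setminus U$ and hence in a common component $G_\ell$, so that $f_{i,j}\in J_{\tilde G_\ell}$. This gives $J_G\subseteq\bigcap_U\mathfrak{p}_U(G)$ immediately.

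\emph{Reverse inclusion.} This is the crux, and I would obtain it by induction resting on a recursive decomposition
\[
J_G=\bigl((x_v,y_v)+J_{G-v}\bigr)\cap J_{G_v},
\]
where $v$ is any vertex and $G_v$ is the graph obtained from $G$ by turning the neighborhood of $v$ into a clique. That $J_G$ lies in both factors is clear; the reverse containment I would establish by a colon-ideal computation comparing initial terms with respect to a suitable monomial order. Since $G_v$ has strictly more edges than $G$ unless the neighborhood of $v$ is already complete, while $G-v$ has one fewer vertex, an induction on the pair (number of non-edges, number of vertices), ordered lexicographically, reduces to the base cases of complete graphs, where $J_{K_m}=\mathfrak{p}_\emptyset$ is the prime determinantal ideal, and of edgeless graphs, where $J_G=0=\mathfrak{p}_\emptyset$. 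It then remains to match the inductive decompositions of the two factors with the ideals $\mathfrak{p}_U(G)$: the primes with $v\notin U$ should arise from $\mathfrak{p}_U(G_v)$, and those with $v\in U$ from $(x_v,y_v)+\mathfrak{p}_{U\setminus\{v\}}(G-v)$, so that intersecting the two decomposed factors reproduces $\bigcap_U\mathfrak{p}_U(G)$. I expect verifying the recursive decomposition and carrying out this reassembly to be the principal obstacle.

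\emph{Minimal primes.} For the second part I would first record the height formula; since determinantal ideals in disjoint variables add heights and $\operatorname{ht}J_{K_m}=m-1$,
\[
\operatorname{ht}\mathfrak{p}_U(G)=2|U|+\sum_{\ell=1}^{c(U)}\bigl(|V(G_\ell)|-1\bigr)=n+|U|-c(U).
\]
For connected $G$, $\mathfrak{p}_\emptyset$ has height $n-1$ and is always minimal. For $U\neq\emptyset$ the decisive step is the comparison of $\mathfrak{p}_U(G)$ with $\mathfrak{p}_{U\setminus\{i\}}(G)$ for $i\in U$: restoring $i$ to $T$ merges exactly the components $G_\ell$ adjacent to $i$, so if $i$ meets at most one such component, equivalently $c(U\setminus\{i\})\geq c(U)$, then every binomial generator of the enlarged complete component either already lies in some $J_{\tilde G_\ell}$ or involves $i$ and so lies in $(x_i,y_i)$, giving $\mathfrak{p}_{U\setminus\{i\}}(G)\subseteq\mathfrak{p}_U(G)$ while the height strictly drops; hence $\mathfrak{p}_U(G)$ is not minimal. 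Conversely, assuming $c(U\setminus\{i\})<c(U)$ for every $i\in U$, I would argue minimality as follows: any prime $\mathfrak{p}_{U'}(G)\subseteq\mathfrak{p}_U(G)$ forces $U'\subseteq U$, since the images of $x_j,y_j$ for $j\notin U$ are nonzero in the domain $S/\mathfrak{p}_U(G)$, and then a descent along single-vertex removals shows that a proper containment would compel some $i\in U$ to fail the cut condition. Ruling out all such proper containments is the delicate point, and I would organize it around the height formula together with the containment criterion just established.
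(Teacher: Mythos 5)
The paper offers no proof of this statement: it is quoted verbatim from Herzog, Hibi, Hreinsd\'ottir, Kahle and Rauh \cite{herzog2010binomial}, so the only meaningful comparison is with that source. Their argument is quite different from yours: they first deduce that $J_G$ is radical from the squarefree initial ideal furnished by Theorem~\ref{thm5}, and then obtain the decomposition by analysing the variety of $J_G$ over the algebraic closure --- a zero of $J_G$ is a $2\times n$ matrix whose columns at the two ends of each edge are proportional, the set $U$ records the vanishing columns, and all columns over a connected component of $G_{[n]\setminus U}$ are forced to be pairwise proportional; the minimal primes are then sorted out by the containment criterion that becomes your cut-point condition. Your alternative --- the recursive decomposition $J_G=\bigl((x_v,y_v)+J_{G-v}\bigr)\cap J_{G_v}$ plus induction on (non-edges, vertices) --- is a legitimate and essentially workable strategy: the decomposition is Ohtani's lemma, and the reassembly you sketch does go through, since $\mathfrak{p}_U(G_v)\supseteq\mathfrak{p}_U(G)$ for every $U$ (adding edges only coarsens the components) and $\mathfrak{p}_U(G)=(x_v,y_v)+\mathfrak{p}_{U\setminus\{v\}}(G-v)$ for $v\in U$, with the intersection passing through $+(x_v,y_v)$ because the remaining ideals live in the other variables. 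What each approach buys: yours avoids any appeal to the Gr\"obner basis for the intersection formula itself, while theirs gets radicality and the decomposition in one stroke and works uniformly without choosing a vertex.

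There are, however, genuine gaps. The containment $J_{G_v}\cap\bigl((x_v,y_v)+J_{G-v}\bigr)\subseteq J_G$ is exactly where the difficulty of the theorem is concentrated; it is not a routine colon computation, and the known proofs of it themselves use Gr\"obner-basis or primary-decomposition input of the same weight as the original argument, so deferring it leaves the proof incomplete. Your base case is also misidentified: the induction bottoms out when every vertex is simplicial, i.e.\ on disjoint unions of complete graphs (where $J_G=\mathfrak{p}_\emptyset(G)$ is prime and contained in every $\mathfrak{p}_U(G)$), not merely on complete or edgeless graphs. Finally, the ``delicate point'' in the minimality direction can be closed concretely and without heights: if every $i\in U$ meets at least two components of $G_{[n]\setminus U}$, then for $U'\subsetneq U$ and $i\in U\setminus U'$ choose neighbours $a,b$ of $i$ in distinct components of $G_{[n]\setminus U}$; they lie in a single component of $G_{[n]\setminus U'}$, so $f_{a,b}\in\mathfrak{p}_{U'}(G)$, while $f_{a,b}\notin\mathfrak{p}_U(G)$ because its image in the tensor product of the determinantal domains attached to the two components is nonzero. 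This rules out $\mathfrak{p}_{U'}(G)\subseteq\mathfrak{p}_U(G)$ directly.
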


 \begin{theorem}{\em \cite[Lemma 3.1]{herzog2010binomial}}\label{height}
Let $G$ be a graph on the vertex set $[n]$. Then 
 for each $U\subseteq [n]$
\[
\text{ht}(\mathfrak{p}_U) = |U| + n - c(U).
\]
\end{theorem}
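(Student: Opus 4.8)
The statement is a codimension formula, so the plan is to compute $\dim(S/\mathfrak{p}_U)$ and invoke $\operatorname{ht}(\mathfrak{p}_U) = 2n - \dim(S/\mathfrak{p}_U)$, which is valid because $S$ is a polynomial ring over a field (hence Cohen--Macaulay, so catenary and equidimensional with $\dim S = 2n$) and $\mathfrak{p}_U$ is prime by the construction recalled in Remark~\ref{prime.description}. The crucial structural observation is that the generators of $\mathfrak{p}_U$ split into blocks supported on pairwise disjoint sets of variables: the linear forms $x_i,y_i$ with $i\in U$ involve only the $U$-indexed variables, while each $J_{\tilde{G}_j}$ involves only the variables indexed by $V(G_j)$, and the sets $V(G_1),\ldots,V(G_{c(U)})$ together with $U$ partition $[n]$.

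First I would exploit this disjointness. Writing $T=[n]\setminus U$ and $m_j=|V(G_j)|$, passing to the quotient by $\{x_i,y_i : i\in U\}$ simply deletes those variables, and the remaining relations $J_{\tilde{G}_1},\ldots,J_{\tilde{G}_{c(U)}}$ live in disjoint polynomial subrings. Hence
\[
S/\mathfrak{p}_U \;\cong\; \bigotimes_{j=1}^{c(U)}\; k[x_i,y_i : i\in V(G_j)]\big/J_{\tilde{G}_j},
\]
a tensor product over $k$ of finitely generated $k$-algebras. Since dimension is additive over such tensor products, it remains only to determine $\dim\big(k[x_i,y_i:i\in V(G_j)]/J_{\tilde{G}_j}\big)$ for each complete-graph block.

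The main point is this per-block computation, which I expect to be the real obstacle. Because $\tilde{G}_j=K_{m_j}$, its binomial edge ideal $J_{\tilde{G}_j}$ is generated by all $x_iy_l-x_ly_i$ with $i,l\in V(G_j)$, that is, precisely the $2\times 2$ minors of the generic $2\times m_j$ matrix whose rows are the relevant $x$'s and $y$'s. The variety it defines is the locus of $2\times m_j$ matrices of rank at most one, namely the affine cone over the Segre embedding $\mathbb{P}^1\times\mathbb{P}^{m_j-1}\hookrightarrow\mathbb{P}^{2m_j-1}$. Parametrizing rank-one matrices as outer products $v\,w^{\mathsf{T}}$ with $v\in k^2$, $w\in k^{m_j}$, and accounting for the one-dimensional scaling fibre $v\mapsto\lambda v$, $w\mapsto\lambda^{-1}w$, gives dimension $2+m_j-1=m_j+1$. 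The content here is confirming that $J_{\tilde{G}_j}$ is prime of exactly this codimension, equivalently that the generic determinantal ideal $I_2$ of a $2\times m_j$ matrix has height $m_j-1$; this is the classical codimension formula $(t-r+1)(s-r+1)$ for $r\times r$ minors of a generic $t\times s$ matrix, specialized to $t=2$, $r=2$, $s=m_j$.

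Finally I would assemble the pieces. Since the connected components partition $T$, we have $\sum_j m_j=|T|=n-|U|$, so
\[
\dim(S/\mathfrak{p}_U)=\sum_{j=1}^{c(U)}(m_j+1)=(n-|U|)+c(U).
\]
Therefore $\operatorname{ht}(\mathfrak{p}_U)=2n-\dim(S/\mathfrak{p}_U)=2n-\big((n-|U|)+c(U)\big)=|U|+n-c(U)$, as claimed. (The degenerate case of a single-vertex component $\tilde{G}_j=K_1$ is consistent, since then $J_{\tilde{G}_j}=0$ and the block contributes dimension $2=m_j+1$ with $m_j=1$.)
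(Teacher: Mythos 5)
Your argument is correct. The paper itself gives no proof of this statement (it is quoted from Herzog--Hibi--Hreinsd\'ottir--Kahle--Rauh, Lemma 3.1), and your route --- splitting $\mathfrak{p}_U$ into blocks on disjoint variables, reducing each block to the ideal of $2\times 2$ minors of a generic $2\times m_j$ matrix of height $m_j-1$, and adding dimensions --- is essentially the standard proof given in that source, so there is nothing to flag.
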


Let $G$ be a graph on the vertex set $[n]$, and suppose that $i,j\in [n]$  and $i < j$. Then a path $\pi : i = i_{0} , i_{1} , \ldots , i_{r} = j$ in $G$  is called \textbf{admissible} if the following conditions hold:
\begin{enumerate}
\item either $i_k < i$ or $i_k > j$ for every $k = 1, \ldots , r - 1$;
\item for each proper subset $\{ j_{1} , \ldots , j_{s} \}$ of $\{i_{1} , \ldots , i_{r-1} \}$, the sequence $i, j_{1} , \ldots , j_{s} , j$ is not a path.
\end{enumerate}
In particular, all the edges $\{i, j\}$ of $G$, with $i < j$, are admissible paths from $i$ to $j$.
Now, let $\pi : i = i_{0} , i_{1} , \ldots , i_{r} = j$ be an admissible path in G. Associated with $\pi$ the following squarefree monomial is defined:
\begin{center}
$u_{\pi} := \underset{i_{k}>j}{\prod} x_{i_{k}} \underset{i_{l}<i}{\prod} y_{i_{l}}$.
\end{center}
The next theorem provides a reduced Gröbner basis for $J_{G}$  with respect to the  monomial order $<_{lex}$ which is the lexicographic term order induced by 
\[
x_1 >_{lex} \ldots >_{lex} x_{n} >_{lex} y_{1} >_{lex} \ldots >_{lex} y_{n}.
\]

\begin{theorem}{\em \cite[Theorem 2.1]{herzog2010binomial}}\label{thm5}
Let $G$ be a graph. Then the following set of binomials in $S$ is a reduced Gröbner basis of $J_{G}$ with respect to $<_{lex}$ as described above:
\begin{center}
$\mathcal{G} = \{ u_{\pi} f_{i j} : \pi \text{ is an admissible path from i to j} \}$.
\end{center}
\end{theorem}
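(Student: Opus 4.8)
The plan is to establish the two defining properties of a reduced Gröbner basis in turn: that $\mathcal{G}$ is a Gröbner basis of $J_G$ with respect to $<_{lex}$, and that it is reduced. Since the binomials $f_{ij}$ with $\{i,j\}\in E(G)$ are exactly the elements of $\mathcal{G}$ attached to length-one (edge) paths, for which $u_\pi=1$, and since I will show every element of $\mathcal{G}$ lies in $J_G$, the set $\mathcal{G}$ generates $J_G$; so by Buchberger's criterion it suffices to prove that every $S$-polynomial of two elements of $\mathcal{G}$ reduces to $0$ modulo $\mathcal{G}$.

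First I would pin down the leading terms. Because the term order places every $x$-variable above every $y$-variable and orders $x_1>_{lex}\cdots>_{lex}x_n$, comparing the two monomials of $f_{ij}=x_iy_j-y_ix_j$ at the slot $x_i$ (recall $i<j$) gives $\initial_{<_{lex}}(f_{ij})=x_iy_j$, and therefore $\initial_{<_{lex}}(u_\pi f_{ij})=u_\pi x_iy_j$. Admissibility condition (1) forces each interior vertex of $\pi$ to be smaller than $i$ or larger than $j$, so it contributes exactly one variable to $u_\pi$ and this leading monomial is squarefree. To see $\mathcal{G}\subseteq J_G$ I would induct on the length $r$ of $\pi$: the case $r=1$ is the generator $f_{ij}$ itself, while for $r\geq 2$ one peels off the first edge of $\pi$ using the Plücker-type syzygies
\[
x_af_{bc}-x_bf_{ac}+x_cf_{ab}=0,\qquad y_af_{bc}-y_bf_{ac}+y_cf_{ab}=0,
\]
valid for any $a,b,c$, which rewrite $u_\pi f_{ij}$ as an $S$-linear combination of binomials attached to strictly shorter admissible paths, all of which lie in $J_G$ by induction.

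The core of the argument, and the step I expect to be the main obstacle, is the $S$-polynomial reduction. Given two elements $u_\pi f_{ij}$ and $u_\rho f_{kl}$ whose leading monomials $u_\pi x_iy_j$ and $u_\rho x_ky_l$ share a common factor, I would form their $S$-polynomial and reduce it against $\mathcal{G}$. The combinatorial content is that an overlap of the leading monomials forces the paths $\pi$ and $\rho$ to meet in a controlled pattern of shared vertices; by splicing, truncating, or re-routing $\pi$ and $\rho$ along these shared vertices one produces new \emph{admissible} paths whose associated binomials are exactly what is needed to cancel the terms of the $S$-polynomial. Carrying this out requires a case analysis on the relative order of the endpoints $i,j,k,l$ and on how the interior vertices of the two paths interleave, and verifying in each configuration both that the required paths are admissible and that the rewriting terminates at $0$ is the delicate part of the proof.

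Finally I would check reducedness, i.e. that for each $g=u_\pi f_{ij}$ no leading monomial $u_\rho x_ky_l$ of a different element of $\mathcal{G}$ divides either monomial of $g$. For the trailing monomial $u_\pi y_ix_j$ this is immediate: all its $x$-variables have index $\geq j$ and all its $y$-variables have index $\leq i$, so $u_\rho x_ky_l\mid u_\pi y_ix_j$ would force $k\geq j>i\geq l$, contradicting the requirement $k<l$ built into every leading monomial. For the leading monomial $u_\pi x_iy_j$, the same index bookkeeping forces $k=i$ and $l=j$; then $u_\rho\mid u_\pi$, and if the two binomials are distinct this divisibility is proper, so the interior vertices of $\rho$ form a path from $i$ to $j$ using a proper subset of the interior vertices of $\pi$, contradicting admissibility condition (2) for $\pi$. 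Hence no such division occurs, $\mathcal{G}$ is reduced, and the proof is complete.
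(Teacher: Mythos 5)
This statement is quoted in the paper directly from Herzog--Hibi--Hreinsdóttir--Kahle--Rauh \cite[Theorem 2.1]{herzog2010binomial}; the paper contains no proof of it, so there is nothing internal to compare against. Measured against the original proof in that reference, your outline follows the same strategy (Buchberger's criterion after establishing ideal membership and identifying leading terms), and the pieces you actually carry out are correct: the computation $\initial_{<_{lex}}(u_\pi f_{ij})=u_\pi x_iy_j$ is right, the membership $u_\pi f_{ij}\in J_G$ via the relations $x_kf_{ij}=x_jf_{ik}+x_if_{kj}$ and $y_kf_{ij}=y_if_{kj}+y_jf_{ik}$ is the standard induction (with the minor caveat that the subpaths produced need not be admissible, so the inductive statement should be phrased for arbitrary paths), and the reducedness bookkeeping on the indices of the two monomials of $u_\pi f_{ij}$, combined with admissibility condition (2), is exactly the right argument.

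The genuine gap is the step you yourself flag as ``the delicate part'': the verification that every $S$-polynomial of two elements of $\mathcal{G}$ reduces to zero. That is not a routine check one can wave at --- it is the mathematical content of the theorem, and in the original reference it occupies essentially the entire proof, via an explicit case analysis on how the supports of the two leading monomials overlap, together with nontrivial combinatorial lemmas guaranteeing that the spliced or truncated walks one produces can be replaced by \emph{admissible} paths whose monomials divide the terms that must be cancelled. Saying that the overlap ``forces the paths to meet in a controlled pattern'' and that one can ``splice, truncate, or re-route'' does not establish either that the needed paths exist and are admissible or that the reduction terminates at $0$; without that case analysis the proposal is an outline of the known proof rather than a proof.
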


The following result by Ene and Herzog provides a sufficient condition for a binomial edge ideal to have equal symbolic and ordinary powers and reduces the problem to squarefree monomial ideals to have the support of rich literature on squarefree monomial ideals with coincident symbolic and ordinary powers.

\begin{theorem}{\em \cite[Theorem 3.3]{ene2018symbolic}}\label{binomial.minimalprime}
Let $G$ be a connected graph on the vertex set $[n]$. If $\initial_{<_{lex}}(J_{G})$ is a normally torsion-free ideal, then $J_{G}^{(k)} = J_{G}^{k}$ for $k \geq 1$.
\end{theorem}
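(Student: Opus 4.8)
The plan is to deduce the equality $J_G^{(k)}=J_G^k$ from the corresponding statement for the squarefree monomial ideal $M:=\initial_{<_{lex}}(J_G)$, by squeezing $\initial_{<_{lex}}(J_G^{(k)})$ between $M^k$ and $M^{(k)}$. First I would record that $M$ really is squarefree: by Theorem~\ref{thm5} the reduced Gr\"obner basis of $J_G$ consists of the binomials $u_\pi f_{ij}$, whose leading monomials $u_\pi x_i y_j$ are squarefree, so $M$ is a squarefree monomial ideal and in particular $J_G$ is radical. Next I would translate the hypothesis through the standard characterization that a squarefree monomial ideal is normally torsion-free if and only if its symbolic and ordinary powers coincide; thus the assumption on $M$ gives $M^{(k)}=M^k$ for every $k\geq 1$.

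The engine of the argument is the elementary fact that if $A\subseteq B$ are homogeneous ideals with $\initial_{<_{lex}}(A)=\initial_{<_{lex}}(B)$, then $A=B$ (take a hypothetical $f\in B\setminus A$ whose leading monomial is minimal, subtract a suitable $g\in A$ with the same leading term, and contradict minimality). Applying this with $A=J_G^k\subseteq B=J_G^{(k)}$, it suffices to prove $\initial_{<_{lex}}(J_G^{(k)})\subseteq \initial_{<_{lex}}(J_G^k)$, the reverse inclusion being automatic from $J_G^k\subseteq J_G^{(k)}$. For this I would assemble the chain
\[
M^k=\initial_{<_{lex}}(J_G)^k\subseteq \initial_{<_{lex}}(J_G^k)\subseteq \initial_{<_{lex}}(J_G^{(k)})\subseteq M^{(k)},
\]
where the first inclusion holds because a product of leading terms is the leading term of the product, the middle inclusion is monotonicity of $\initial_{<_{lex}}$ under $J_G^k\subseteq J_G^{(k)}$, and the last inclusion is the key step. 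Since $M^{(k)}=M^k$, every inclusion collapses to an equality, so in particular $\initial_{<_{lex}}(J_G^{(k)})=\initial_{<_{lex}}(J_G^k)$, as required.

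The crux, and the step I expect to be the main obstacle, is the inclusion $\initial_{<_{lex}}(J_G^{(k)})\subseteq M^{(k)}$. Because $M$ is squarefree, $M^{(k)}=\bigcap_{\mathfrak q}\mathfrak q^{k}$, where $\mathfrak q$ runs over the monomial minimal primes of $M$ and $\mathfrak q^{(k)}=\mathfrak q^{k}$ for each such $\mathfrak q$. Hence it is enough to show that for every $f\in J_G^{(k)}$ and every minimal prime $\mathfrak q=(x_i:i\in C)$ of $M$ one has $\initial_{<_{lex}}(f)\in\mathfrak q^{k}$, i.e. that $\sum_{i\in C}\deg_{x_i}\initial_{<_{lex}}(f)\geq k$. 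The natural way to control this is to relate the minimal primes of $M=\initial_{<_{lex}}(J_G)$ to the primes $\mathfrak p_U(G)$ of Theorem~\ref{binomial.primes}: by a Kalkbrener--Sturmfels-type correspondence each minimal prime $\mathfrak q$ of $M$ is a minimal prime of $\initial_{<_{lex}}(\mathfrak p_U(G))$ for some minimal $\mathfrak p_U(G)$, with $\operatorname{ht}\mathfrak q=\operatorname{ht}\mathfrak p_U(G)$ (matching Theorem~\ref{height}).

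Given this correspondence, I would finish by a valuation/order estimate. Since $f\in J_G^{(k)}\subseteq \mathfrak p_U(G)^{(k)}$, the symbolic order of $f$ along each minimal prime of $J_G$ is at least $k$; the technical heart is the order inequality that, for $\mathfrak q$ a minimal prime of $\initial_{<_{lex}}(\mathfrak p_U(G))$, the $\mathfrak q$-order of $\initial_{<_{lex}}(f)$ is bounded below by the $\mathfrak p_U(G)$-order of $f$. Establishing this inequality — controlling how orders behave under Gr\"obner degeneration to a \emph{squarefree} special fiber — is where the squarefreeness of $M$ is genuinely used and is the delicate point I would expect to spend the most effort on. Once it is in place, $\initial_{<_{lex}}(f)\in\bigcap_{\mathfrak q}\mathfrak q^{k}=M^{(k)}$ follows for all $f\in J_G^{(k)}$, the chain of inclusions closes, and the theorem follows.
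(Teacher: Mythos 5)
The paper does not prove this statement at all: it is quoted verbatim from Ene--Herzog \cite[Theorem 3.3]{ene2018symbolic}, so there is no in-paper proof to compare against. Judged on its own, your architecture is exactly the standard one (and, as far as I recall, the one Ene and Herzog use): sandwich $\initial_{<_{lex}}(J_G^{(k)})$ in the chain $M^k\subseteq \initial_{<_{lex}}(J_G^k)\subseteq \initial_{<_{lex}}(J_G^{(k)})\subseteq M^{(k)}$, use normal torsion-freeness of the squarefree ideal $M$ to collapse it, and recover $J_G^{(k)}=J_G^k$ from equality of initial ideals. All of those surrounding steps are correct and adequately justified.

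The genuine gap is the inclusion $\initial_{<_{lex}}(J_G^{(k)})\subseteq M^{(k)}$, which you correctly identify as the crux but do not actually prove. The route you sketch --- a Kalkbrener--Sturmfels matching of minimal primes of $M$ with minimal primes $\mathfrak p_U(G)$ of $J_G$, followed by a general ``order inequality'' asserting that the $\mathfrak q$-order of $\initial_{<_{lex}}(f)$ dominates the symbolic $\mathfrak p_U(G)$-order of $f$ --- is not a known fact about Gr\"obner degenerations and will not follow from squarefreeness of $M$ alone; the containment $\initial_{<}(\mathfrak p^{(k)})\subseteq (\initial_{<}\mathfrak p)^{(k)}$ fails for general primes, and whether $\initial_{<}(I^{(k)})\subseteq(\initial_{<}I)^{(k)}$ holds is precisely the nontrivial content here, not a black box one may invoke. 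The missing idea is to exploit the explicit structure of the minimal primes from Remark~\ref{prime.description}: each $\mathfrak p_U(G)$ is a sum, in pairwise disjoint sets of variables, of an ideal generated by variables and of binomial edge ideals of complete graphs, i.e.\ ideals of $2$-minors of generic $2\times m$ matrices. For those determinantal ideals one knows the Gr\"obner bases of all powers, that symbolic and ordinary powers agree, and that $\initial_{<}(\mathfrak p_U^{(k)})=(\initial_{<}\mathfrak p_U)^{k}$; combining this over the intersection $J_G^{(k)}=\bigcap_U\mathfrak p_U(G)^{(k)}$ is what actually yields $\initial_{<_{lex}}(J_G^{(k)})\subseteq M^{(k)}$. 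Without that input your argument does not close.
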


\medskip

Let $R$ be a reduced Noetherian ring of prime characteristic $p$, and consider $R^{1/p} $ obtained by adjoining all $p$th roots of elements of R.
Suppose that $\{I_n\}_{n \geq 0}$ is a sequence of ideals in $R$. The sequence $\{I_n\}_{n \geq 0}$ is an \textbf{$F$-split filtration} if the following holds:
\begin{enumerate}
\item $I_0 = R$.
\item $I_{n+1} \subseteq I_n$, for every $n \geq 0$.
\item $I_n I_m \subseteq I_{n+m}$, for every $n, m \geq 0$.
\item There is a splitting $\phi : R^{1/p} \to R$, of $R \subseteq R^{1/p}$, such that $\phi(((I_{np+1})^{1/p})) \subseteq I_{n+1}$ for every $n \geq 0$.
\end{enumerate}

An ideal $I\subseteq R$ is called \textbf{symbolic $F$-split} if $\{I^{(n)}\}_{n \geq 0}$ is an $F$-split filtration.

\medskip

The \textbf{symbolic Rees algebra} of an ideal $I\subseteq R$, denoted by $\mathscr{R}^s(I)$, is defined as follows:
\[
\mathscr{R}^s(I) = R \oplus I^{(1)}t \oplus I^{(2)}t^2 \oplus I^{(3)}t^3 \cdots \subseteq R[t].
\]   
 
Let $I$ be an ideal of a ring $R$ of prime characteristic $p$, and $q=p^e$ for some nonnegative integer $e$. The $q$th Frobenius power of $I$, denoted by $I^{[q]}$, defined to be 
\[
I^{[q]}=(f^q: f\in I).
\]

\section{Equality of symbolic and ordinary powers}\label{2nd} 

In this section, we study symbolic powers and symbolic Rees algebra of binomial edge ideals of some families of graphs.


\medskip

\begin{theorem}\label{pc}
If $G$ is a pendant cliques graph, then
\[
J_{G}^{t}=J_{G}^{(t)}
\]
for every $t \geq 1$.
\end{theorem}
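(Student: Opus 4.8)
The plan is to reduce the equality of powers to a property of the initial ideal, and then to a purely combinatorial statement about a simplicial complex. Since $G$ is connected, Theorem~\ref{binomial.minimalprime} applies, so it suffices to prove that $\initial_{<_{lex}}(J_G)$ is normally torsion-free; from there $J_G^{(t)}=J_G^{t}$ for all $t\geq 1$ follows immediately. By Theorem~\ref{thm5} the reduced Gröbner basis of $J_G$ consists of the binomials $u_{\pi}f_{ij}$, whose leading terms are the squarefree monomials $u_{\pi}x_iy_j$; hence $\initial_{<_{lex}}(J_G)$ is a squarefree monomial ideal and the notion of normal torsion-freeness is exactly the one controlled by the combinatorics recalled in Section~\ref{1st}.

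To establish normal torsion-freeness I would use the sufficient criterion that a squarefree monomial ideal $I$ is normally torsion-free whenever the associated simplicial complex $\Delta(I)$ has \emph{no special odd cycle} (this is precisely why the notions of cycle and special cycle were introduced; it rests on the fact that balanced hypergraphs enjoy the max-flow--min-cut property). Thus the whole problem becomes: show that $\Delta\bigl(\initial_{<_{lex}}(J_G)\bigr)$ has no special odd cycle. For this I would first fix a labeling of $V(G)$ adapted to the block structure, ordering the vertices of the central path $P$ monotonically and labeling the vertices of each attached clique consecutively right after its attaching vertex, and then read off the minimal generators $\G\bigl(\initial_{<_{lex}}(J_G)\bigr)$ explicitly from the admissible paths. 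The crucial structural feature is that the cliques are glued to $P$ at \emph{distinct} vertices, so every attaching vertex is a cut vertex and $G$ is a tree of cliques; consequently the sequence of cut vertices between any two vertices is unique, which sharply limits which admissible paths $\pi$ occur and hence which facets appear in $\Delta$.

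The heart of the argument, and the main obstacle, is then to rule out any special odd cycle
\[
v_1,\, F_1,\, v_2,\, F_2,\, \ldots,\, v_s,\, F_s,\, v_{s+1}=v_1
\]
among these facets, where each $F_k$ is the support of a generator $u_{\pi_k}x_{i_k}y_{j_k}$ and the $v_k$ are variables $x_a$ or $y_b$. I would track, along such a hypothetical cycle, how consecutive facets must share their distinguished variables $x_{i_k},y_{j_k}$ together with the path-variables recorded in $u_{\pi_k}$, and argue that the tree-of-cliques structure forces the traversed edges into a single clique or through a repeated cut vertex: in the first case the facets are so large that some facet contains at least three of the $v_k$, violating speciality once $s\geq 3$, while in the second case one obtains either a shorter cycle or a chord exhibiting a facet with three cycle vertices, again contradicting speciality. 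Either way a special \emph{odd} cycle cannot arise. I expect the delicate point to be the bookkeeping of how the $u_\pi$-factors overlap across cut vertices, and I would tame it by induction on the number of attached cliques, peeling off a clique glued at an outermost cut vertex of $P$ and comparing the generator sets of the initial ideals before and after the peel.
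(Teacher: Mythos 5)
Your overall strategy is the same as the paper's: reduce via Theorem~\ref{binomial.minimalprime} to showing that $\initial_{<_{lex}}(J_G)$ is normally torsion-free, and establish that by proving $\Delta(\initial_{<_{lex}}(J_G))$ has no special odd cycle, using the Herzog--Hibi--Trung--Zheng machinery and a labeling adapted to the central path. The paper does exactly this, localizing cycles by observing that any cycle meeting both $V_{<j}=\{x_i,y_i\mid i<j\}$ and $V_{>j}=\{x_i,y_i\mid i>j\}$ for a path vertex $j$ must use $y_j$ twice, which reduces everything to a single clique with at most one whisker.

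However, the heart of your argument --- ruling out special odd cycles --- has a genuine gap, and one step as stated is wrong. For a cycle confined to a single clique $K_t$, the facets of $\Delta(\initial_{<_{lex}}(J_{K_t}))$ are the two-element sets $\{x_i,y_j\}$ with $i<j$; no such facet can contain three cycle vertices, so your claim that ``the facets are so large that some facet contains at least three of the $v_k$'' cannot be the mechanism. The correct reason (and the one the paper uses) is parity: along any cycle the vertices alternate between $x$-variables and $y$-variables, so the cycle has even length. More importantly, you never engage with the only genuinely delicate case, namely the three-element facets arising from admissible paths of length two through an attaching vertex --- in the paper's normalization these are the facets $F_i=\{y_1,x_i,y_{t+1}\}$ of a clique with a whisker at its smallest-labeled vertex, and in your labeling they are the facets $\{y_a,x_{a+k},y_{a+t}\}$ linking a pendant clique to the next path vertex. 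These facets break the $x$/$y$ bipartition (they contain two $y$-variables), so the parity argument alone does not apply, and the paper's proof spends its entire case (2)(b) showing by a careful case analysis on how $y_1$ can sit inside a special cycle that no special odd cycle uses them. You flag this as ``the delicate point'' but supply no argument, and your proposed induction (peeling off a clique and comparing generator sets) is not developed enough to substitute for it: it is not clear that a special odd cycle in the larger complex would restrict to one after the peel, whereas the paper's $V_{<j}/V_{>j}$ localization handles this cleanly. As it stands, the proposal is a correct plan with the decisive combinatorial lemma left unproved.
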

\begin{proof}
First, we fix a central path $P:v_1,v_2,\ldots,v_r$ of $G$. 
We consider a labeling $\sigma : V(G) \rightarrow \{1,\ldots , n\}$ which is a one-to-one corresponding with the following properties:
\begin{enumerate}
\item For  distinct vertices $v_{i}$ and $v_{j}$, of $P$, if $i<j$, then
\begin{center}
$\sigma(v_i) < \sigma(v_j)$.
\end{center}
\item Let $K_{t_i}$ be the clique joined via $v_i\in V(P)$, and  $v$  be a vertex of $K_{t_i}$. Then $\sigma(v) > \sigma(v_i)$ if $i \neq 1$, and $\sigma(v) < \sigma(v_i)$ if $i = 1$.
\item Let  $v$ be a vertex of $K_{t_i}$, the clique joined via $v_i$, and $w$ be a vertex of $K_{t_j}$, the clique joined via $v_j$. If $i<j$, then
\begin{center}
$\sigma(v) < \sigma(w)$.
\end{center}
 \end{enumerate} 
By \cite[Theorem 2.5, Corollary 1.5, and Corollary 1.6]{herzog2008standard}, the facet ideal of a simplicial complex with no special odd cycle has coincident ordinary and symbolic powers.  So we only need to show that $\Delta(\initial_{<_{lex}}(J_{G}))$ has no special odd cycle.  Regarding the labeling described above and by Theorem~\ref{thm5}, the facets of $\Delta(\initial_{<_{lex}}(J_{G}))$ are $\{x_i,y_j\}$'s where $\{i<j\}$ is an edge of $G$, and $\{x_i,y_\ell,y_j\}$ where $\ell$ and $j$ are two vertices of $P$ with $\ell<i<j$ and $i$ is a vertex of the clique joined via $\ell$. For each $j$ on the central path $P$, set
\[V_{<j} = \{x_{i},y_{i}| i< j\}\]
and
\[V_{>j} = \{x_{i},y_{i}| i> j\}.\]
Then every cycle in $G$ that intersects both $V_{<j}$ and $V_{>j}$ must include $y_{j}$ twice and hence it is not special. As a result, we can only have special cycles in induced subcomplex on either $V_{<j}$ or $V_{>j}$ for each vertex $j$ on the central path $P$. So in order to prove that $\Delta(\initial_{<_{lex}}(J_{G}))$ has no special odd cycle, we may assume that $G$ is a complete graph, say $K_t$, with at most one whisker.  By labeling described above, the whisker would be either $\{t,t+1\}$ attached to the vertex $t$ of $K_t$, or $\{1,t+1\}$ attached to the vertex $1$ of $K_t$.

First, suppose that $G$ is only a complete graph $K_t$ with no whisker. By Theorem \ref{thm5}, facets of  $\Delta(\initial_{<_{lex}}(J_{G}))$ are only $G_{i,j}=\{x_{i},y_{j}\}$'s where $\{i<j\}$ is an edge of $K_t$. In this case, a special  cycle of $\Delta(\initial_{<_{lex}}(J_{G}))$ is of the form
\[
x_{i_1}G_{i_1, j_1} y_{j_1} G_{i_2, j_1}x_{i_2}\ldots y_{j_t}G_{i_1,j_t}x_{i_1}.
\]
In such a cycle  $x_{i_\ell}$'s exactly appear every other step, as well as $y_{j_\ell}$'s. Thus, the length of the cycle $C$ could be even.

Next, suppose that $G$ is the complete graph with a whisker. Two cases occur:
\begin{enumerate}
\item
First suppose that $G$ is the complete graph $K_t$ on the vertex set $\{1,\ldots,t\}$ with the whisker $\{t,t+1\}$. In  this case, by Theorem \ref{thm5}, facets of $\Delta(\initial_{<_{lex}}(J_{G}))$ are  $G_{i,j}=\{x_{i},y_{j}\}$ for each $1\leq i<j\leq t$, and $i=t$ and $j=t+1$. Since $G$ is a closed graph, $\Delta(\initial_{<_{lex}}(J_{G}))$ is a bipartite graph, as clarified in \cite{ene2018symbolic}, and does not have an odd cycle.

\item
Now suppose that $G$ is the complete graph $K_t$ on the vertex set $\{1,\ldots,t\}$ with the whisker $\{1,t+1\}$. By Theorem \ref{thm5}, facets of $\Delta(\initial_{<_{lex}}(J_{G}))$ are as follows: $G_{i,j}=\{x_{i},y_{j}\}$ for $1 \leq i < j \leq t$ or $i = 1$ and $j = t+1$, and $F_{i}=\{ y_{1},x_{i},y_{t+1} \}$ for $2 \leq i \leq t$.  

We claim that $G$ has no special odd cycle including $y_1$. Assume that $y_1$ appears in  a special odd cycle $C$ of $\Delta(\initial_{<_{lex}}(J_{G}))$.
Recall that $y_1$ only belongs to facets of type $F_i$. Regarding the facets and vertices appearing after $y_1$ in  $C$,  we distinguish two
cases:
\begin{enumerate}
\item
First, let the subsequence $F_i,y_1,F_{j},x_j$ appears in $C$ for some $2 \leq i \leq t$. Since $C$ is a special odd cycle, the vertex before $F_i$ can only be $x_i$. On the other hand, the other facets appearing in $C$ can be of type $G_{k,\ell}$. Hence $x_{k}$'s exactly appear every other step, as well as $y_{\ell}$'s. Thus, the length of the cycle $C$ is even.
\item
Next, suppose that the subsequence $F_i,y_{1},F_{j},y_{t+1}$ appears in $C$.  Each vertex of $F_i$ comes before it, the cycle is not special because three vertices of $F_i$ appear in the sequence.
\end{enumerate}

Hence $y_1$ does not appear in  a special odd cycle  of $\Delta(\initial_{<_{lex}}(J_{G}))$. So in a special cycle $C$ of $\Delta(\initial_{<_{lex}}(J_{G}))$, we can have $x_i$ and $y_{t+1}$ of $F_i$'s and $x_i$ and $y_j$ of $G_{i,j}$'s. So $x_{i}$'s exactly appear every other step in $C$, as well as $y_{j}$'s. Thus, the length of the cycle $C$ could be even.
\end{enumerate}
\end{proof}


Throughout the rest of this section, we assume that rings are of  prime characteristic $p$. The following result gives a sufficient condition for an ideal to be symbolic \(F\)-split.
\begin{lemma}\label{fs}
{\em \cite[Lemma 3.9]{ramirez2024symbolic}}
Let \(J\) be a homogeneous radical ideal in a polynomial ring over a field \(k\). Let \(J = \mathfrak{q_1} \cap \ldots \cap \mathfrak{q_{\ell}}\) , with \( \mathfrak{q_{i}} \in \min J \) and \(h_i = ht(q_i)\). Let \(\mathfrak{m}\) be the irrelevant ideal. If there is an \(f\) such that \( f \in \bigcap_{i} q_{i}^{h_i} \) and \( f^{p-1} \notin \mathfrak{m}^{[p]} \), then \(J\) is symbolic \(F\)-split.
\end{lemma}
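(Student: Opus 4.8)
The plan is to verify directly that the family $\{J^{(n)}\}_{n\ge 0}$ is an $F$-split filtration. Writing $J=\bigcap_i \mathfrak{q}_i$ with the $\mathfrak{q}_i$ its minimal primes, one has $J^{(n)}=\bigcap_i \mathfrak{q}_i^{(n)}$, and from this the first three axioms are routine: $J^{(0)}=R$, the filtration is decreasing, and $J^{(n)}J^{(m)}\subseteq\bigcap_i\mathfrak{q}_i^{(n)}\mathfrak{q}_i^{(m)}\subseteq\bigcap_i\mathfrak{q}_i^{(n+m)}=J^{(n+m)}$ since $\mathfrak{q}^{(n)}\mathfrak{q}^{(m)}\subseteq\mathfrak{q}^{(n+m)}$ for a prime $\mathfrak{q}$. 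Thus everything reduces to producing a single splitting $\phi\colon R^{1/p}\to R$ of the inclusion $R\subseteq R^{1/p}$ that satisfies the compatibility $\phi\big((J^{(np+1)})^{1/p}\big)\subseteq J^{(n+1)}$ for every $n\ge 0$; this is the heart of the argument.

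To build $\phi$ I would use the standard description of $p^{-1}$-linear maps. Let $\Phi\colon R^{1/p}\to R$ be the generator of the free rank-one $R^{1/p}$-module $\operatorname{Hom}_R(R^{1/p},R)$, so that every $R$-linear map $R^{1/p}\to R$ has the form $a^{1/p}\mapsto \Phi\big((ua)^{1/p}\big)$ for a unique $u\in R$, and $\Phi$ acts on monomials by sending $z^{c/p}$ (where $z$ runs over the $2n$ variables) to $z^{(c-(p-1)\mathbf{1})/p}$ when every exponent of $c$ is $\equiv p-1 \pmod p$, and to $0$ otherwise. The hypothesis $f^{p-1}\notin\mathfrak{m}^{[p]}$ means $f^{p-1}$ has a monomial all of whose exponents are $<p$, and this is exactly the condition guaranteeing that the map attached to $f^{p-1}$ has image not contained in $\mathfrak{m}$, hence is surjective. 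Choosing $s_0\in R$ with $\Phi\big((f^{p-1}s_0)^{1/p}\big)=1$ and setting $u:=f^{p-1}s_0$, the map $\phi\colon a^{1/p}\mapsto\Phi\big((ua)^{1/p}\big)$ is then a genuine splitting of $R\subseteq R^{1/p}$. The point of this normalization is that $u$ still lies in $\bigcap_i \mathfrak{q}_i^{(p-1)h_i}$, since $f\in\bigcap_i\mathfrak{q}_i^{h_i}$ gives $f^{p-1}\in\bigcap_i\mathfrak{q}_i^{(p-1)h_i}$ and this intersection is an ideal, closed under multiplication by $s_0$.

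It remains to check the compatibility, and this is where $f\in\bigcap_i\mathfrak{q}_i^{h_i}$ is used. Because every element of $(J^{(np+1)})^{1/p}$ is of the form $g^{1/p}$ with $g\in J^{(np+1)}$, it suffices to show $\Phi\big((ug)^{1/p}\big)\in J^{(n+1)}=\bigcap_i\mathfrak{q}_i^{(n+1)}$, and since $\mathfrak{q}_i^{(n+1)}=\mathfrak{q}_i^{n+1}R_{\mathfrak{q}_i}\cap R$, I would verify the membership after localizing at each minimal prime $\mathfrak{q}_i$ separately. In $R_{\mathfrak{q}_i}$, which is regular local of dimension $h_i$ with maximal ideal $\mathfrak{n}_i$, symbolic powers become ordinary powers of $\mathfrak{n}_i$, so $u\in\mathfrak{n}_i^{(p-1)h_i}$ and $g\in\mathfrak{n}_i^{\,np+1}$, whence $ug\in\mathfrak{n}_i^{\,(p-1)h_i+np+1}$. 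The crux is the trace estimate in a regular local ring, $\Phi\big((\mathfrak{n}_i^{\,s})^{1/p}\big)\subseteq \mathfrak{n}_i^{\lceil (s-(p-1)h_i)/p\rceil}$, which follows by resolving the monomial action of $\Phi$ against a regular system of parameters; plugging in $s=(p-1)h_i+np+1$ gives exponent $\lceil (np+1)/p\rceil=n+1$, exactly as needed. Globalizing through the contraction formula then yields $\Phi\big((ug)^{1/p}\big)\in J^{(n+1)}$.

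I expect the main obstacle to be this last local estimate together with the exponent bookkeeping: one must see precisely how the height $h_i$ enters, through $f\in\mathfrak{q}_i^{h_i}$ and the $(p-1)h_i$ shift built into the trace map, so that the shift cancels against the $(p-1)h_i$ order contributed by $u$ and the surviving index $np+1$ rounds up to exactly $n+1$. A secondary delicate point is the passage from the bare non-containment $f^{p-1}\notin\mathfrak{m}^{[p]}$ to an honest splitting $\phi$; handling it via surjectivity and multiplying $f^{p-1}$ by a corrector $s_0$ that does not disturb membership in $\bigcap_i\mathfrak{q}_i^{(p-1)h_i}$ is what makes the normalization harmless.
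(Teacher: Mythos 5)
The paper does not prove this lemma: it is quoted verbatim as \cite[Lemma 3.9]{ramirez2024symbolic}, so there is no internal proof to compare against. Your reconstruction is, as far as I can tell, essentially the argument of the cited source (which in turn follows the method of \cite{msn}): the first three filtration axioms are indeed immediate from $J^{(n)}=\bigcap_i\mathfrak{q}_i^{(n)}$, and the substance is axiom (4), handled by premultiplying the Cartier/trace generator $\Phi$ by $u=f^{p-1}s_0$ and invoking the local estimate $\Phi\bigl((\mathfrak{q}^{(s)})^{1/p}\bigr)\subseteq\mathfrak{q}^{(\lceil (s-(p-1)h)/p\rceil)}$ for a prime $\mathfrak{q}$ of height $h$; your exponent bookkeeping $s=(p-1)h_i+np+1\mapsto n+1$ is correct, and passing the symbolic powers through localization at each $\mathfrak{q}_i$ is the right way to globalize. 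The one step stated too quickly is the passage from $f^{p-1}\notin\mathfrak{m}^{[p]}$ to a genuine splitting: in a polynomial ring an ideal not contained in $\mathfrak{m}$ need not be the unit ideal, so ``image not contained in $\mathfrak{m}$, hence surjective'' requires either that the image is a homogeneous ideal (which holds when $f$ is homogeneous, as it is in every application in this paper) or that one first localizes at $\mathfrak{m}$ and then descends; with that caveat the normalization by $s_0$, and the observation that $u$ remains in $\bigcap_i\mathfrak{q}_i^{(p-1)h_i}$, are sound. I would also make explicit the standing assumption that $k$ is perfect (or at least $F$-finite), which is needed for $\operatorname{Hom}_R(R^{1/p},R)$ to be free of rank one over $R^{1/p}$ with the monomial description of $\Phi$ you use.
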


\begin{setup}\label{gpcki}
Let $G$ be a generalized pendant cliques  graph, and fix a central path $P:v_1,\ldots,v_r$. Suppose that $p(v_i)$ denotes the number of cliques joined to $P$ via $v_i$. If
\[
K_{v_i,1}, \ldots , K_{v_i,p(v_i)}
\]
are cliques of size greater than one joined to $P$ via a vertex $v_i\in V(P)$, we set 
\[
\gamma_{v_i}=\sum_{\beta=1}^{p(v_i)} (|K_{v_i,\beta}|-1),
\]
which is the number of all vertices attached to $v_i$ by clique joins on $P$ via $v_i$. When a subset $\{s_1,\ldots,s_m\}\subseteq V(P)$ is specified, we denote $\gamma_{s_i}$ by $\gamma_i$ for short.
\end{setup}

\begin{remark}\label{label.generalized}
Let $G$ be a generalized pendant graph. Fix a central path $P=v_1,\ldots,v_r$ of $G$.  We consider a labeling $\sigma : V(G) \rightarrow \{1,\ldots , n\}$ which is a one-to-one corresponding with the following properties:
\begin{enumerate}
\item Let $K_t$ be a clique joined via a vertex $v_i\in V(P)$, and $K_s$ be a clique joined via the edge $\{v_i,v_{i+1}\}\in E(G)$. Suppose that $v\in V(K_t)\setminus \{v_i\}$ and $w\in V(K_s)\setminus \{v_i,v_{i+1}\}$. Then
\[
\sigma(v_i)<\sigma(v)<\sigma(w)<\sigma(v_{i+1}).
\]
\item Let $K_{t}$ and $K_{s}$ be two cliques joined via a vertex $v_i$ of  $P$, and suppose that $w_1,w_2\in V(K_{t})$ and $z\in V(K_{s})$. Then $\sigma(z)$ does not lie between $\sigma(w_1)$ and $\sigma(w_2)$. In other words, we label the vertices of cliques joined via a vertex one by one.
 \end{enumerate} 
\end{remark}

\begin{remark}\label{connected.components}
Let \( G \) be a generalized pendant cliques graph on $[n]$ labeled as described in Remark~\ref{label.generalized}, and suppose that $P:v_1,\ldots,v_r$ is  a central path of $G$. Assume Setup~\ref{gpcki}. By Theorem~\ref{binomial.primes},  if $\mathfrak{p}_{U}(G)$ is a minimal prime ideal of $J_G$ then $U=\emptyset$ or $U=\{1<s_1<\cdots<s_m<n\}\subseteq V(P)$. Using the inductive argument, we can find the connected components of the induced subgraph of $G$ on  $[n]\setminus U$ when $U\neq \emptyset$. Assume that $s_1,\ldots, s_{i}$ has been removed from $G$ for some $i\geq 1$ before, and now remove $s_{i+1}$. Then the following connected components would appear:
\begin{itemize}
\item The induced subgraph on the interval $[s_i+\gamma_i+1,s_{i+1}-1]$ of vertices if the interval is non-empty.
\item The induced subgraph  $K_t-s_{i+1}$ for each clique $K_t$ of size $t>1$ joined via $s_{i+1}$.
\item The induced subgraph on the interval $[s_{i+1}+\gamma_{i+1}+1,n]$ of vertices. 
\end{itemize}
Notice that the interval in the last case is always non-empty. In fact, if $s_m+\gamma_m=n$, then $s_m$ is the last vertex of $P$ and $n$ is a vertex of some clique $H=K_t$ joined via $s_m$. So one can find a path $\tilde{P}$ which is $P$  continued by $n$, that is, 
\[
\tilde{P}:v_1,\ldots,v_r,n.
\]
To obtain $G$ by clique joins, all clique joins via vertices and edges in $\tilde{P}$ are the same as $P$ except  $H=K_t$. In $\tilde{P}$, $H-n=K_t-n$ is joined via the edge $\{s_m,n\}\in E(\tilde{P})$. Thus we find $G$ by clique joins on a path longer than $P$; a contradiction to the fact that the central path $P$ is a longest path among possible choices. 

\end{remark}

\medskip

By Remark~\ref{connected.components}, we have the following result. In this result, assume Setup~\ref{gpcki}. 
\begin{lemma}\label{number.cc}
Let \( G \) be a generalized pendant cliques graph on $[n]$ labeled as described in Remark~\ref{label.generalized}, and suppose that $P$ is  a central path of $G$. If $U=\{1<s_1<\cdots<s_m<n\}\subseteq V(P)$, then $c(U)$, the number of  connected components of the induced subgraph on  $[n]\setminus U$, is equal to
\[
c(S) =  \sum_{i=0}^{m} \alpha_i + \sum_{i=1}^{m} p(s_i),
\]
where for  $i=1,\ldots ,m-1$, we set $\alpha_i=1$ if the interval  $[s_i+\gamma_i+1,s_{i+1}-1]$ is non-empty, and $\alpha_i=0$ otherwise. We also set $\alpha_0=\alpha_m=1$.
\end{lemma}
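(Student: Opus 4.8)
The plan is to delete the vertices $s_1 < s_2 < \cdots < s_m$ from $G$ one at a time and tally the connected components that split off at each stage, relying on the explicit description furnished by Remark~\ref{connected.components}. Since a generalized pendant cliques graph is connected, there is a single component before any deletion; each deletion of $s_{i+1}$ then peels off a fixed collection of new components while leaving exactly one ``unprocessed'' component, namely the induced subgraph on $[s_{i+1}+\gamma_{i+1}+1,n]$, on which the next deletion acts. Summing the peeled-off components over the $m$ deletions will yield $c(U)$.

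For uniform bookkeeping I would treat the first deletion on the same footing as the later ones by adopting the convention $s_0 = 0$ and $\gamma_0 = 0$, so that deleting $s_1$ splits off the leftmost interval $[1,s_1-1]$ exactly as the general step of Remark~\ref{connected.components} splits off $[s_i+\gamma_i+1,s_{i+1}-1]$. With this convention, deleting $s_{i+1}$ for $i=0,1,\ldots,m-1$ contributes the interval component $[s_i+\gamma_i+1,s_{i+1}-1]$ whenever it is non-empty (recorded by $\alpha_i$), together with $p(s_{i+1})$ pairwise disjoint clique remnants $K_t - s_{i+1}$, one for each clique $K_t$ of size greater than one joined via $s_{i+1}$. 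After the final deletion the only component left unprocessed is $[s_m+\gamma_m+1,n]$, recorded by $\alpha_m$. Collecting the interval components gives $\sum_{i=0}^m \alpha_i$ and collecting the clique remnants gives $\sum_{i=1}^m p(s_i)$, which is precisely the claimed formula.

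Two points require care. First, the outer terms $\alpha_0$ and $\alpha_m$ are always $1$: since $1\notin U$ and $n\notin U$, the vertices $1$ and $n$ survive and lie in the leftmost and rightmost intervals respectively, so these intervals are non-empty — the non-emptiness of $[s_m+\gamma_m+1,n]$ being exactly the maximality-of-$P$ argument already supplied at the end of Remark~\ref{connected.components}. Second, I must confirm that cliques joined via edges of $P$ introduce no uncounted components: such a clique attached along an edge $\{v_j,v_{j+1}\}$ stays attached to whichever of $v_j,v_{j+1}$ is not deleted, and if both are deleted (so they are consecutive path vertices both lying in $U$) its remnant falls inside the corresponding interval $[s_i+\gamma_i+1,s_{i+1}-1]$ and is therefore already absorbed into $\alpha_i=1$.

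I expect the principal obstacle to be precisely this verification that the listed components are exhaustive and pairwise distinct: that each interval is internally connected, which is guaranteed by the surviving path edges together with the labeling of Remark~\ref{label.generalized}, under which all appendages of a given path vertex occupy a contiguous block of labels; and that the $p(s_i)$ vertex-clique remnants at a fixed $s_i$ are genuinely separate components, which holds because each such clique meets the remainder of $G$ only in $s_i$ by the pendant-clique construction, so no two of them reconnect after $s_i$ is removed. Once these structural facts are secured, the arithmetic of summing the $\alpha_i$ and the $p(s_i)$ is immediate.
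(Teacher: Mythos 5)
Your proposal is correct and follows essentially the same route as the paper: the paper's proof is an induction on $m$ that at each step invokes Remark~\ref{connected.components} to see that deleting $s_m$ replaces the rightmost component by $\alpha_{m-1}+p(s_m)+1$ new ones, which is exactly your one-vertex-at-a-time tally in inductive form. Your additional verifications (connectivity of the intervals, separateness of the clique remnants, absorption of edge-joined cliques) are details the paper leaves implicit in Remark~\ref{connected.components}, and they are correctly handled.
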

\begin{proof}
We prove the lemma by induction on $m$. First, suppose that $m=1$. Then $\alpha_m=\alpha_1=1$ as set in the statement. Clearly, by removing $s_1$, we have a connected component on the interval $[1,s_1-1]$ of vertices, $p(s_1)$ components by cliques joined via $s_1$, and one component on $[s_1+\gamma_1+1,n]$. So in this case, one has
\[
c(S) =  2+  p(s_1)= \alpha_0+\alpha_1+p(s_1).
\] 

Assume that $m\geq 2$. By induction hypothesis, removing $U=\{1<s_1<\cdots<s_{m-1}\}\subseteq V(P)$ gives 
$ \alpha_0+\alpha_1+\ldots+\alpha_{m-2}+1 + \sum_{i=1}^{m-1} p(s_i)$
 components. Hence by Remark~\ref{number.cc}, after removing $s_m<n$, one of the previous components, that is, the component on $[s_{m-1}+\gamma_{m-1}+1, n]$ becomes $\alpha_{m-1}+p(s_m)+1$ new components. Hence $\alpha_{m-1}+p(s_m)$ is added to the number of components in the induction hypothesis.
\end{proof}
\medskip

\begin{theorem}\label{gpcfs}
Let \( G \) be a generalized pendant cliques graph. Then \( J_G \) is symbolic \( F \)-split.
\end{theorem}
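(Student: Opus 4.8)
The plan is to verify the hypotheses of Lemma~\ref{fs}. Binomial edge ideals are radical and homogeneous, so it suffices to exhibit a single polynomial $f$ lying in $\bigcap_{U}\mathfrak{p}_U(G)^{h_U}$, the intersection being taken over all minimal primes of $J_G$ with $h_U=\operatorname{ht}\mathfrak{p}_U(G)$, and satisfying $f^{p-1}\notin\mathfrak{m}^{[p]}$. I would first fix the labeling of Remark~\ref{label.generalized} and record the data I need: by Remark~\ref{connected.components} the minimal primes are $\mathfrak{p}_\emptyset(G)$ and the $\mathfrak{p}_U(G)$ with $U=\{s_1<\cdots<s_m\}\subseteq V(P)$, by Theorem~\ref{height} and Lemma~\ref{number.cc} their heights are $h_U=|U|+n-c(U)$, and by Remark~\ref{connected.components} together with the ``one-by-one'' labeling of clique vertices the connected components of $G[[n]\setminus U]$ are pairwise disjoint \emph{intervals} of labels.

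The candidate is the product along the consecutive-label Hamiltonian path of the complete graph $K_n$,
\[
f=\prod_{i=1}^{n-1} f_{i,i+1}\in S .
\]
The Frobenius condition is then immediate and characteristic-free: since $\initial_{<_{lex}}(f_{i,i+1})=x_iy_{i+1}$, one has $\initial_{<_{lex}}(f)=x_1\cdots x_{n-1}\,y_2\cdots y_n$, a \emph{squarefree} monomial, so $\initial_{<_{lex}}(f^{p-1})=\big(x_1\cdots x_{n-1}y_2\cdots y_n\big)^{p-1}$ has every exponent equal to $p-1<p$ and hence lies outside $\mathfrak{m}^{[p]}$; therefore $f^{p-1}\notin\mathfrak{m}^{[p]}$.

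It remains to show $f\in\mathfrak{p}_U(G)^{h_U}$ for each minimal prime. Each factor $f_{i,i+1}$ belongs to $J_G\subseteq\mathfrak{p}_U(G)$ unless $i,i+1\notin U$ lie in different components of $G[[n]\setminus U]$; call such an edge \emph{bad}. Thus $f\in\mathfrak{p}_U(G)^{(n-1)-b_U}$, where $b_U$ is the number of bad edges, and the required membership $f\in\mathfrak{p}_U(G)^{h_U}$ amounts to the inequality $b_U\le c(U)-|U|-1$. Reading the labels $1,\dots,n$ as a sequence of interval-blocks (the components) separated by the $|U|$ removed vertices, a bad edge is exactly an adjacency between two distinct component intervals with no vertex of $U$ between them; if the component intervals form $R$ maximal runs then $b_U=c(U)-R$, and since $|U|$ separators create at most $|U|+1$ runs we always have $R\le|U|+1$, i.e. $b_U\ge c(U)-|U|-1$. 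Hence the desired inequality forces the equality $b_U=c(U)-|U|-1$, equivalently $R=|U|+1$: each of the $|U|+1$ gaps around the vertices of $U$ must contain at least one component (for $U=\emptyset$ this reads $c(\emptyset)=1$, $b_\emptyset=0$).

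Establishing $R=|U|+1$ is the crux of the argument and the step I expect to be the main obstacle. The two end gaps are nonempty: $[1,s_1-1]\ni 1$, and the interval $[s_m+\gamma_m+1,n]$ is nonempty by the central-path maximality already exploited in Remark~\ref{connected.components}. For an interior gap between consecutive $s_i<s_{i+1}$ in $U$, emptiness would force $s_{i+1}=s_i+1$ to be a \emph{bare} path edge carrying no clique join (so $p(s_i)=0$, $\gamma_i=0$, and no edge-clique on $\{s_i,s_{i+1}\}$); but then reinstating $s_i$ only reattaches it to the block immediately below it and does not merge two components, so $c(U\setminus\{s_i\})=c(U)$, contradicting the minimality criterion of Theorem~\ref{binomial.primes}. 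Therefore no interior gap is empty, $R=|U|+1$ for every minimal $U$, and $f$ lies in each $\mathfrak{p}_U(G)^{h_U}$; Lemma~\ref{fs} then yields that $J_G$ is symbolic $F$-split.
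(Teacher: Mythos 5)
Your proposal is correct and rests on the same pillars as the paper's proof: the same test element (the paper uses $f=y_1f_{1,2}\cdots f_{n-1,n}x_n$; your dropping of the harmless extra factors $y_1,x_n$ changes nothing, since the leading term of $\prod_i f_{i,i+1}$ is already squarefree), the same sufficient criterion (Lemma~\ref{fs}), and the same component data from Remark~\ref{connected.components}. Where you genuinely diverge is in the bookkeeping for $f\in\mathfrak{p}_U^{h_U}$. The paper constructs an explicit divisor $g$ of $f$, feeds the factors $f_{s_i-1,s_i}f_{s_i,s_i+1}$ into $(x_s,y_s\mid s\in U)^{2m}$ and the factors $g_T$ into $J(K_T)^{|T|-1}$, and then sums $2m+\sum_T(|T|-\alpha_T)$ against the height formula. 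You instead count down from $n-1$ by the number of ``bad'' consecutive pairs straddling two components, observe $b_U=c(U)-R$ with $R$ the number of maximal runs, and reduce everything to $R=|U|+1$, i.e.\ to the nonemptiness of all $|U|+1$ gaps. The two counts are the same computation organized differently, but your version buys something real: the nonemptiness of the interior gaps (equivalently, that consecutive elements of $U$ are never adjacent labels), which you derive cleanly from the minimality criterion $c(U\setminus\{s_i\})<c(U)$ of Theorem~\ref{binomial.primes}, is a point the paper's argument actually relies on implicitly --- if $s_{i+1}=s_i+1$ the factor $f_{s_i,s_i+1}$ would be repeated in the paper's product defining $g$, and $g$ would no longer divide $f$ --- yet the paper never addresses it. The paper handles the end gap $[s_m+\gamma_m+1,n]$ by the central-path maximality argument (as you note), but leaves the interior gaps to the reader. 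So your route is not only valid but slightly more complete; the one thing to make sure you state explicitly is that the components of $G[[n]\setminus U]$ are intervals of labels (this is where condition~2 of Remark~\ref{label.generalized}, the ``one-by-one'' labeling of cliques attached at a common vertex, is used), since the identity $b_U=c(U)-R$ depends on it.
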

\begin{proof}
Let $G$ be on the vertex set $[n]$ with labeling described in Remark~\ref{label.generalized}, and fix a central path $P$. We set
\[ 
f = y_1 f_{1,2} \ldots f_{n-1,n} x_n.
\]
We are going to show that $f$ is the desired element in Lemma~\ref{fs}. For this purpose, we first show that  \( f \in \mathfrak{p}_U^{h} \) for each minimal prime \( \mathfrak{p}_U \) of \( J_G \) where \( h = \text{ht}(\mathfrak{p}_U) \). 
By Remark~\ref{connected.components}, we distinguish two cases \( U = \emptyset \) and \( U \neq \emptyset \).

First, suppose that \( U = \emptyset \). Then by Remark~\ref{prime.description},  \( \mathfrak{p}_U \) is the binomial edge ideal of the complete graph $K_n$. In particular. \( f_{i, i+1} \in \mathfrak{p}_U \) for every \( i \in [n-1] \). Hence, \( f \in \mathfrak{p}_U^{n-1} \). On the other hand, $n-1$ is  $\text{ht}(\mathfrak{p}_U)$ by Theorem~\ref{height}. Hence \( f \in \mathfrak{p}_U^h \).

Next, suppose that \( U \neq \emptyset \). Then by Remark~\ref{connected.components}  $U$ is of the form $ \{1<s_1< s_2< \cdots< s_m<n\}\subseteq V(P)$.  Consider $p(s_i)$ and $\gamma_i=\gamma_{s_i}$ as Setup~\ref{gpcki}.
By inductive argument in Remark~\ref{connected.components}, finally, one has the following connected components by  removing $U=\{1<s_1<\cdots<s_m<n\}\subseteq V(P)$:
\begin{enumerate}
\item[(C1)] The induced subgraph on the interval $T=[1,s_1-1]$.
\item[(C2)] The induced subgraph on the interval $T=[s_i+\gamma_i+1,s_{i+1}-1]$  for each $i=1,\ldots,m-1$ if the interval is non-empty.
\item[(C3)] The induced subgraph  $K_{t}-s_{i}$ for each clique $K_{t}$ with $t\geq 2$ joined via $s_{i}$ and each $i=1,\ldots,m$. In this case, set $T=V(K_{t}-s_{i})$.
\item[(C4)] The induced subgraph on the interval $T=[s_{m}+\gamma_{m}+1,n]$.
\end{enumerate}
By Remark~\ref{prime.description}, each of these connected components gives a summand $J(K_T)$ of $\mathfrak{p}_U$ where $J(K_T)$ is the binomial edge ideal of the complete graph $K_T$ on the vertex set $T$. More precisely,  If $\mathfrak{T}$ is the set of all $T$'s described in (C1-C4), then
\[
\mathfrak{p}_U =  (x_s, y_s \mid s \in U)+\sum_{T\in \mathfrak{T}} J(K_T).
\]
Associated with each set $T\in \mathfrak{T}$, we define a polynomial $g_T$ as follows: $g_T$ is $1$ when $|T|\leq 1$, and otherwise 
\[
g_T=\prod_{\{j,j+1\}\subseteq T} f_{j,j+1}.
\] 
Now, set the polynomial $g$ to be
\[
g=( \prod_{i=1}^{m} f_{s_{i}-1,s_{i}}f_{s_{i},s_{i}+1})(\prod_{T\in\mathfrak{T}}g_T).
\]
The polynomial $g$ divides $f$. So in order to show that \( f \in \mathfrak{p}_U^h \), we are going to show that \( g \in \mathfrak{p}_U^h \). For this purpose, first notice that
\[
 f_{s_{i}-1,s_{i}}f_{s_{i},s_{i}+1}\in (x_s, y_s \mid s \in U)^2.
 \]
 Hence
 \[
 \prod_{i=1}^{m} f_{s_{i}-1,s_{i}}f_{s_{i},s_{i}+1}\in (x_s, y_s \mid s \in U)^{2m}\subseteq \mathfrak{p}_U^{2m}.
 \]
 Next, we consider the factors $g_T$'s. If $T\neq \emptyset$, then
 \[
 g_T\in J(K_T)^{\left| T \right| -1}\subseteq \mathfrak{p}_U^{\left| T \right| -1},
 \]
and if $T= \emptyset$, then $g_T=1\in\mathfrak{p}_U^0 $. So we can conclude that 
\[
 g_T\in \mathfrak{p}_U^{\left| T \right| -\alpha_T}
\]
where $\alpha_T=1$ if $T\neq \emptyset$, and $\alpha_T=0$ otherwise. 
Hence, 
\[
g\in \mathfrak{p}_U^b
\]
where $b=2m+\sum_{T\in \mathfrak{T}} (\left| T \right|-\alpha_T)$.
Now, we compute $\sum (\left| T \right|-\alpha_T)$ over $T$'s described  in (C1-C4).
\begin{enumerate}
\item[(D1)] For $T$ described in (C1), $\alpha_T=1$ because $1<s_1$ and consequently, $T\neq \emptyset$. So $\left| T\right|-\alpha_T=s_1-1-\alpha_0$ where $\alpha_0$ is the number described in Lemma~\ref{number.cc}.
\item[(D2)] For $T$'s described in (C2), $\left| T\right|-\alpha_T=s_{i+1}-s_i-\gamma_i-1-\alpha_i$ where $i=1,\ldots,m-1$, and $\alpha_i$ is the number described in Lemma~\ref{number.cc}. 
\item[(D3)] For $T=V(K_t-s_{i})$'s described in (C3), one has $\left| T\right|-\alpha_T=t-1-1$ where $i=1,\ldots,m$ and $K_t$ is a clique joined via $s_i$. Notice that in this case we always have $\alpha_T=1$. We conclude that sum of $(\left| T\right|-\alpha_T)$'s over all $T$'s described in (C3) is equal to
\[
\sum_{i=1}^{m}\gamma_i-\sum_{i=1}^{m} p(s_i),
\]
where $\gamma_i$ and $p(s_i)$ are as Setup~\ref{gpcki}.
\item[(D4)] For $T$ described in (C4), $\alpha_T=1$ because $T\neq \emptyset$ as clarified in Remark~\ref{connected.components}. Thus $\left| T\right|-\alpha_T=n-s_{m}-\gamma_{m}-\alpha_m$ where $\alpha_m$ is the number described in Lemma~\ref{number.cc}.
\end{enumerate}
Finally, we have
\begin{equation*}
\begin{aligned}
b&=2m+\sum_{T\in \mathfrak{T}} (\left| T \right|-\alpha_T)\\
  &=2m+(s_1-1-\alpha_0)+\sum_{i=1}^{m-1}(s_{i+1}-s_i-\gamma_i-1-\alpha_i)+\\
  & \,\,\,\,\,\,\, \sum_{i=1}^{m}\gamma_i-\sum_{i=1}^{m} p(s_i)+(n-s_{m}-\gamma_{m}-\alpha_m)\\
  &=m+n-\sum_{i=0}^{m}\alpha_i-\sum_{i=1}^{m} p(s_i).
\end{aligned}
\end{equation*}
But this is exactly $h=\text{ht}(\mathfrak{p}_U)$ by Theorem~\ref{height} and Lemma~\ref{number.cc}. Hence $g$ and consequently $f$ belong to $\mathfrak{p}_U^h$, as desired.

On the other hand, one can see that \(f^{p-1} \notin \mathfrak{m}^{[p]}\) where \(\mathfrak{m}\) is the irrelevant ideal.  This is a consequence of the fact that $f$ has a squarefree term \(u=y_1x_1y_2x_2y_3 \ldots x_{n-1}y_nx_n\) and none of the elements of \(\mathfrak{m}^{[p]}\) divide \(u^{p-1}\). 

Thus Lemma \ref{fs} implies that \(J_G\) is symbolic \(F\)-split. 
\end{proof}

\medskip

\begin{corollary}\label{cor.pcfs}
Let \( G \) be a pendant cliques graph. Then \( J_G \) is symbolic \( F \)-split.
\end{corollary}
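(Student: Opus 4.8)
The plan is to recognize that a pendant cliques graph is a special case of a generalized pendant cliques graph, so that the statement follows at once from Theorem~\ref{gpcfs}. Recall that a generalized pendant cliques graph is built from a central path $P$ by permitting an arbitrary finite number of clique joins via each vertex of $P$ together with at most one clique join via each edge of $P$, whereas a pendant cliques graph is obtained by clique joins on a path $P$ via \emph{pairwise distinct} vertices $v_1,\ldots,v_m$, with no clique joins via edges.

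First I would verify that the defining data of a pendant cliques graph satisfies the constraints in the definition of a generalized pendant cliques graph. Since every attached clique is joined to a single vertex of $P$, there are no edge clique joins, which trivially respects the bound of at most one clique join per edge. Furthermore, because the vertices $v_1,\ldots,v_m$ are pairwise distinct, each vertex of $P$ receives at most one clique join, and this is an allowed value of ``an arbitrary finite number of clique joins via each vertex.'' Hence every pendant cliques graph is in particular a generalized pendant cliques graph.

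It then remains only to apply Theorem~\ref{gpcfs}, which yields that $J_G$ is symbolic $F$-split. I do not expect any genuine obstacle in this step: the essential content---exhibiting the splitting element $f = y_1 f_{1,2}\ldots f_{n-1,n}x_n$, checking that $f\in\mathfrak{p}_U^{\text{ht}(\mathfrak{p}_U)}$ for each minimal prime $\mathfrak{p}_U$ of $J_G$, and confirming $f^{p-1}\notin\mathfrak{m}^{[p]}$ so that Lemma~\ref{fs} applies---has already been carried out there. The only point demanding care is the bookkeeping of the definitions, ensuring that the restriction ``via pairwise distinct vertices with no edge joins'' really does land inside the broader class governed by Theorem~\ref{gpcfs}.
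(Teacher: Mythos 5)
Your proposal is correct and is exactly the paper's (implicit) argument: the paper states this as an immediate corollary of Theorem~\ref{gpcfs}, relying on the fact that a pendant cliques graph is by definition a generalized pendant cliques graph (no edge joins and at most one clique join per vertex). Your verification of that containment is the only content needed.
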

\begin{corollary}\label{cor.gcfs}
Let \( G \) be a net-free generalized caterpillar graph. Then \( J_G \) is symbolic \( F \)-split.
\end{corollary}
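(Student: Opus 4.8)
The plan is to deduce Corollary~\ref{cor.gcfs} from Theorem~\ref{gpcfs} by establishing the purely combinatorial claim that every net-free generalized caterpillar graph is, for a suitable choice of central path, a generalized pendant cliques graph. Once this structural fact is in hand, the conclusion is immediate: Theorem~\ref{gpcfs} already guarantees that $J_G$ is symbolic $F$-split for every generalized pendant cliques graph.

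To carry this out I would first compare the two defining constructions. Both permit at most one clique join via each edge of the chosen path, so the edge-joined cliques of a generalized caterpillar graph present no discrepancy. Likewise, a whisker attached to a vertex lying on the central path is nothing but a clique join of $K_2$ via that vertex, which is among the vertex joins allowed in a generalized pendant cliques graph. Hence the only feature a generalized caterpillar graph may carry that is not directly permitted is a whisker attached to a vertex $u$ belonging to a clique $K_t$ with $t\ge 3$ but not lying on the path.

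The core of the argument is to exploit net-freeness to control exactly these whiskers. Suppose $K_t$ is joined via an edge $\{v_i,v_{i+1}\}$ and a whisker $w$ is attached to an apex $u\in V(K_t)\setminus\{v_i,v_{i+1}\}$; then $\{v_i,v_{i+1},u\}$ spans a triangle and $w$ is a pendant neighbor of $u$. I would observe that if the path extended freely past both $v_i$ and $v_{i+1}$, so that $v_{i-1}$ and $v_{i+2}$ served as pendant neighbors of $v_i$ and $v_{i+1}$ respectively, then (because cliques sit on distinct edges, forcing $v_{i-1}\not\sim v_{i+1},u$ and $v_{i+2}\not\sim v_i,u$, while $w\sim u$ only) the induced subgraph on $\{v_{i-1},v_i,v_{i+1},v_{i+2},u,w\}$ would be a net, contradicting the hypothesis. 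Consequently such a clique must sit at an end of the relevant stretch of the path, and I can reroute the central path through $u$ and its whisker $w$, converting the offending whisker into a path edge and the triangle into an edge-joined clique whose apex is now one of the $v_i$. Iterating this rerouting, while invoking the convention that the central path is a longest \emph{admissible construction} path, should yield a presentation of $G$ as a generalized pendant cliques graph.

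The step I expect to be the main obstacle is the bookkeeping of this rerouting. After pushing the path through the apexes and their whiskers I must check that no edge of the new path receives two distinct clique joins, that every remaining clique is attached at a genuine path vertex (in particular that a former path endpoint turned apex carries no clique or whisker of its own, which again should be forced by net-freeness), and that the new path is consistent with the longest-construction-path convention rather than merely the longest path in the graph. The delicate point is that net-freeness must be applied simultaneously to all cliques bearing whiskers on non-path vertices, and one must verify that these local reroutings do not interfere with one another. Once the structural claim is secured, Corollary~\ref{cor.gcfs} follows at once from Theorem~\ref{gpcfs}.
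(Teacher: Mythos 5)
The paper gives no proof of this corollary at all: it is stated as an immediate consequence of Theorem~\ref{gpcfs}, the implicit claim being precisely the containment you set out to prove, namely that every net-free generalized caterpillar graph admits a presentation as a generalized pendant cliques graph (indeed, Proposition~\ref{gc.f-regular} later applies the labeling of Remark~\ref{label.generalized} to such graphs without comment). So your route is the intended one, and your proposal supplies detail the paper omits. Your key net computation is correct: if a clique $K_t$ joined via an interior edge $\{v_i,v_{i+1}\}$ carries an apex $u$ with a whisker $w$, then $\{v_{i-1},v_i,v_{i+1},v_{i+2},u,w\}$ induces a net, since clique joins never create edges between pre-existing path vertices and $w$ is pendant; hence such a clique can only sit on the first or last edge of $P$. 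The bookkeeping you flag does close up. There are at most two offending cliques (one per end edge), and their reroutings occur at opposite ends of the path, so they cannot interfere. A second whiskered apex $u'$ in the same end clique would produce a net on the triangle $\{u,u',v_2\}$ with private neighbors $w$, $w'$, $v_3$, and a whisker at the absorbed endpoint $v_1$ would produce a net on $\{v_1,v_2,u\}$ with private neighbors that whisker, $v_3$, and $w$; so when $r\geq 3$ net-freeness rules both out, while the degenerate case $r=2$ (the whole graph is one clique with pendants, net-free exactly when at most two clique vertices carry pendants) is handled by a direct choice of path. Finally, your worry about the longest-path convention is moot: membership in the class of generalized pendant cliques graphs only requires \emph{some} path on which the construction works, and the central path is then chosen a posteriori among such paths.
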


\medskip

By Theorem~\ref{gpcfs}, in particular, we have the following result:
\begin{corollary} {\em \cite[Theorem 4.6]{ramirez2024symbolic}}\label{cor.cat}
Let \( G \) be a  caterpillar graph. Then \( J_G \) is symbolic \( F \)-split.
\end{corollary}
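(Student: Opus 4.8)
The plan is to recognize a caterpillar graph as a special instance of a generalized pendant cliques graph and then to invoke Theorem~\ref{gpcfs} directly. Recall that a caterpillar graph is a tree that reduces to a path $P$---its spine---after deletion of all of its leaves; equivalently, it is obtained from a path by attaching some pendant vertices to the vertices of that path. Each such pendant vertex is precisely a whisker, that is, a clique join of $K_2$ on $P$ via a vertex, and since a caterpillar is a tree it contains no clique of size at least three. Consequently there are neither clique joins of $K_t$ with $t\geq 3$ via a vertex, nor any clique joins via an edge (which would create a triangle). Thus a caterpillar graph is exactly a generalized pendant cliques graph in which every clique joined via a vertex of the central path is $K_2$ and no clique is joined via an edge.

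Having made this identification, I would simply apply Theorem~\ref{gpcfs}, which asserts that the binomial edge ideal of any generalized pendant cliques graph is symbolic $F$-split. This immediately yields that $J_G$ is symbolic $F$-split whenever $G$ is a caterpillar graph, which is the assertion of the corollary.

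The only point requiring care---and it is a mild one---is to confirm that the definitional match is complete. In particular, one should check that the definition of a generalized pendant cliques graph permits an arbitrary finite number of whiskers at a single vertex of $P$, so that a caterpillar carrying several leaves on one spine vertex is covered, and that whiskers may be attached at the endpoints of $P$ as well. Both features are allowed by the definition, since it admits an arbitrary finite number of clique joins via each vertex of $P$; if one wishes to work with a genuine central path, one may instead pass to a longest path of $G$, but this is unnecessary because Theorem~\ref{gpcfs} already applies to any generalized pendant cliques graph. I do not anticipate any substantive obstacle: the entire content of the corollary is this containment of graph classes, and the symbolic $F$-splitness is inherited wholesale from Theorem~\ref{gpcfs}.
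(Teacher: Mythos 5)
Your proposal is correct and matches the paper's own derivation exactly: the paper obtains this corollary as an immediate special case of Theorem~\ref{gpcfs}, viewing a caterpillar as a generalized pendant cliques graph whose only clique joins are whiskers ($K_2$'s) attached via vertices of the spine. Your additional check that the definition permits several whiskers at one spine vertex is the right detail to verify, and it holds.
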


\medskip

We now turn our attention to the problem  whether the symbolic Rees Algebra associated with a binomial edge ideal is strongly \( F \)-regular.  The following result by Ram{\'\i}rez-Moreno gives a sufficient condition for symbolic Rees Algebras to be strongly \( F \)-regular. Recall that $p$ is characteristic of ring $S$.
\begin{lemma}\label{sfrlemma}
{\em \cite[Lemma 5.1]{ramirez2024symbolic}}
Let \( J \) be a homogeneous radical ideal of a polynomial ring \( S \) over a field \( k \) with the irrelevant ideal \( \mathfrak{m} \). Suppose that \( \mathfrak{q}_1, \mathfrak{q}_2, \ldots, \mathfrak{q}_l \) are the minimal prime ideals of \( J \). For \( i \in [l] \), set \( h_i = \operatorname{ht}(\mathfrak{q}_i) \), and suppose that \( f \in S \) such that \( f = cq \) with \( c, q \in S \) and \( c \ne 0 \). Suppose the following holds:
\begin{enumerate}
\item \( q \in \bigcap_i \mathfrak{q}_i^{h_i - 1} \);
\item \( f^{p-1} \notin \mathfrak{m}^{[p]} \);
\item \( (\mathscr{R}^s(J))_c \) is strongly \( F \)-regular;
\item \( \mathscr{R}^s(J) \) is Noetherian.
\end{enumerate}
Then \( \mathscr{R}^s(J) \) is strongly \( F \)-regular.
\end{lemma}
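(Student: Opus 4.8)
The plan is to reduce strong $F$-regularity of $R:=\mathscr{R}^s(J)$ to the existence of a single well-chosen Frobenius splitting and then to manufacture that splitting inside the ambient ring $S[t]$. First I would collect the structural facts needed for the theory to apply: $R$ is a graded subring of the domain $S[t]$, hence a reduced domain; hypothesis (4) makes it Noetherian; and since $S$ is $F$-finite (the field $k$ being $F$-finite, as is tacit throughout), the finitely generated $S$-algebra $R$ is itself $F$-finite, so $R^{1/p^e}$ is a finitely generated $R$-module. These are precisely the standing hypotheses under which strong $F$-regularity in the sense of Hochster--Huneke is defined.

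Next I would invoke the standard reduction step: a reduced, $F$-finite, Noetherian ring $R$ is strongly $F$-regular as soon as there is one element $C\in R^0$ such that $R_C$ is strongly $F$-regular and, for some $e$, the $R$-linear map $R\to R^{1/p^e}$, $1\mapsto C^{1/p^e}$, splits. I would take $C=c$, viewed in the degree-zero component $S\subseteq R$. Hypothesis (3) then immediately supplies that $R_c$ is strongly $F$-regular, so the entire problem collapses to producing, for a single suitable $e$, an $R$-linear map $\psi:R^{1/p^e}\to R$ with $\psi(c^{1/p^e})=1$.

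The construction of $\psi$ is the heart of the matter and is where hypotheses (1) and (2) are spent. I would build $\psi$ from the canonical Frobenius trace $\Phi:S^{1/p^e}\to S$ of the polynomial ring, twisted by the appropriate cofactor of $f=cq$; the natural choice makes $\psi(c^{1/p^e})$ equal to $\Phi\big((f^{p^e-1})^{1/p^e}\big)$, so that the Fedder-type non-degeneracy $f^{p-1}\notin\mathfrak{m}^{[p]}$ of hypothesis (2) (propagated to level $p^e$) is exactly what forces this value to be a nonzero constant and hence, after rescaling, equal to $1$. The delicate issue is that the twisted trace a priori lives only on $S[t]^{1/p^e}$, so I must check that it descends to the symbolic Rees subalgebra $R=\bigoplus_n J^{(n)}t^n$, i.e. that each graded piece $\big(J^{(m)}t^m\big)^{1/p^e}$ is carried into the graded piece of $R$ dictated by the $F$-split filtration bookkeeping. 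This is exactly the role of hypothesis (1): writing $J^{(n)}=\bigcap_i\mathfrak{q}_i^{(n)}$ and using $\mathfrak{q}_i^{h_i-1}\subseteq\mathfrak{q}_i^{(h_i-1)}$, the factor $q\in\bigcap_i\mathfrak{q}_i^{h_i-1}$ absorbs, prime by prime, the discrepancy between the ordinary and symbolic filtrations, so that multiplication by $f^{p^e-1}$ followed by the trace respects the symbolic grading. The appearance of $h_i-1$ here, in contrast to the $h_i$ governing symbolic $F$-splitness in Lemma~\ref{fs}, is precisely what leaves room to peel off the extra factor $c$, thereby upgrading $F$-splitness to strong $F$-regularity.

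I expect the main obstacle to be this descent-and-linearity verification: showing rigorously that the twisted trace sends each symbolic graded piece back into $R$, and does so $R$-linearly rather than merely $S$-linearly. Quantifying how multiplication by $f^{p^e-1}$ shifts the symbolic filtration under $\Phi$, and confirming that the resulting degrees match those demanded by the $F$-split filtration condition, is the technical core and rests on a careful prime-by-prime tracking of the containments furnished by (1). Once $\psi$ is in hand, the reduction of the second paragraph delivers that $\mathscr{R}^s(J)$ is strongly $F$-regular.
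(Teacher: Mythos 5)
This lemma is imported verbatim from \cite[Lemma 5.1]{ramirez2024symbolic}; the paper itself supplies no proof, so there is no internal argument to compare yours against. Your outline does reproduce the strategy of the cited source: reduce, via \cite[Theorem 3.3]{hochster1989tight}, to exhibiting one element $c\in R^{0}$ of $R=\mathscr{R}^s(J)$ (a Noetherian, $F$-finite domain by hypothesis (4), $R\subseteq S[t]$, and the tacit $F$-finiteness of $k$, so $c\neq 0$ suffices for $c\in R^0$) with $R_c$ strongly $F$-regular --- hypothesis (3) --- and with $1\mapsto c^{1/p}$ splitting; then build the splitting as a twisted Frobenius trace. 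Two places in your third paragraph need sharpening before this is a proof. First, $f^{p-1}\notin\mathfrak{m}^{[p]}$ does not make $\Phi\bigl((f^{p-1})^{1/p}\bigr)$ a nonzero constant unless $f$ is homogeneous of degree $\dim S$ (true in this paper's applications, not assumed in the lemma); in general Fedder's lemma only gives some $u\in S$ with $\Phi\bigl((u f^{p-1})^{1/p}\bigr)=1$, so the splitting must be $\psi(-)=\Phi\bigl((u\,c^{p-2}q^{p-1}\cdot(-))^{1/p}\bigr)$, and one checks the extra factor $u\,c^{p-2}$ cannot spoil the descent since it only multiplies further into the target ideals. Second, the descent you flag as ``the technical core'' is not mere bookkeeping: sending $(J^{(np)})^{1/p}t^{n}$ into $J^{(n)}t^{n}$ requires $q^{p-1}J^{(np)}\subseteq(\mathfrak{q}_i^{(n)})^{[p]}$ for each $i$, and since $q^{p-1}J^{(np)}\subseteq\mathfrak{q}_i^{((h_i-1)(p-1)+np)}$ with $(h_i-1)(p-1)+np=h_i(p-1)+(n-1)p+1$, the load-bearing ingredient is the characteristic-$p$ containment $\mathfrak{q}^{(h(p-1)+kp+1)}\subseteq(\mathfrak{q}^{(k+1)})^{[p]}$ for a height-$h$ prime of a regular ring (Hochster--Huneke; see also \cite{msn}). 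That containment is exactly where the exponent $h_i-1$ in hypothesis (1), as opposed to the $h_i$ of Lemma~\ref{fs}, is spent, confirming your heuristic that it ``leaves room to peel off $c$.'' With these two ingredients named, your sketch is a faithful reconstruction of the argument in \cite{ramirez2024symbolic}.
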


\medskip

In \cite[Theorem 5.5]{ramirez2024symbolic}, it is shown that if $G$ is a closed graph such that $J_G$ is unmixed, then  by setting $f = y_1 f_{1,2} \cdots f_{n-1,n} x_n$ and $c=f_{1,2}$, in Lemma~\ref{sfrlemma}, one obtains that \( \mathscr{R}^{s}(J_G) \) is strongly $F$-regular. One can see that the same argument can be applied to  pendant cliques graphs and net-free generalized caterpillar graphs as follows:
\begin{proposition}\label{pendant.f-regular}
Let \( G \) be a  pendant cliques graph.  Then \( \mathscr{R}^{s}(J_G) \) is strongly \(F\)-regular.
\end{proposition}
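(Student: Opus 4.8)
The plan is to verify the four hypotheses of Lemma~\ref{sfrlemma} for $J=J_G$, using exactly the data from \cite[Theorem 5.5]{ramirez2024symbolic}. Since a pendant cliques graph is in particular a generalized pendant cliques graph, I would put $G$ on $[n]$ with the labelling of Remark~\ref{label.generalized} and set
\[
f = y_1 f_{1,2}\cdots f_{n-1,n}\, x_n, \qquad c = f_{1,2}, \qquad q = f/c = y_1 f_{2,3}\cdots f_{n-1,n}\, x_n,
\]
so that $f=cq$ with $c\neq 0$. The minimal primes of $J_G$ are the $\mathfrak{p}_U$ of Theorem~\ref{binomial.primes}, of height $h=\operatorname{ht}(\mathfrak{p}_U)$ given by Theorem~\ref{height}.

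For condition (1) I would recycle the height computation from the proof of Theorem~\ref{gpcfs}. There, for each minimal prime $\mathfrak{p}_U$ of height $h$, a divisor $g$ of $f$ (a product of various $f_{j,j+1}$) is produced with $g\in\mathfrak{p}_U^{h}$. The point is that $f_{1,2}$ always divides this $g$: if $U=\emptyset$ every $f_{j,j+1}$ lies in $\mathfrak{p}_\emptyset=J_{K_n}$; if $U\neq\emptyset$ and $s_1>2$ then $f_{1,2}$ appears in the factor $g_T$ for $T=[1,s_1-1]$; and if $s_1=2$ then $f_{1,2}=f_{s_1-1,s_1}$ appears among the factors $f_{s_i-1,s_i}f_{s_i,s_i+1}$. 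In each case the removed factor $f_{1,2}$ contributes to the $\mathfrak{p}_U$-adic exponent with multiplicity one, so $g/f_{1,2}\in\mathfrak{p}_U^{h-1}$; since $g/f_{1,2}$ divides $q$, this gives $q\in\mathfrak{p}_U^{h-1}$. As this holds for every minimal prime, condition (1) follows. Condition (2) is identical to the corresponding step of Theorem~\ref{gpcfs}: $f$ has the squarefree term $u=y_1x_1y_2x_2\cdots x_{n-1}y_nx_n$, and no generator of $\mathfrak{m}^{[p]}$ divides $u^{p-1}$, so $f^{p-1}\notin\mathfrak{m}^{[p]}$. For condition (4) I would invoke Theorem~\ref{pc}: since $J_G^{(t)}=J_G^{t}$ for all $t$, the symbolic Rees algebra equals the ordinary Rees algebra $\mathscr{R}(J_G)$, which is a finitely generated $S$-algebra and hence Noetherian.

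The step carrying the real content is condition (3), that $(\mathscr{R}^s(J_G))_c$ is strongly $F$-regular. The key observation is that $\{1,2\}\in E(G)$ in this labelling, so $f_{1,2}\in J_G$; inverting $c=f_{1,2}$ therefore forces $J_G S_c=S_c$, and since $J_G^{t}\subseteq J_G^{(t)}\subseteq J_G$ we obtain $J_G^{(t)}S_c=S_c$ for every $t\geq 1$. Consequently
\[
(\mathscr{R}^s(J_G))_c = \bigoplus_{m\geq 0} J_G^{(m)}S_c\, t^m = \bigoplus_{m\geq 0} S_c\, t^m = S[t]_c,
\]
a localization of a polynomial ring over a field, hence regular and therefore strongly $F$-regular. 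With all four hypotheses verified, Lemma~\ref{sfrlemma} yields that $\mathscr{R}^s(J_G)$ is strongly $F$-regular.

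I expect the main obstacle to be the bookkeeping in condition (1)---namely checking that one factor of $f_{1,2}$ can always be stripped from the divisor $g$ of Theorem~\ref{gpcfs} while lowering the $\mathfrak{p}_U$-adic exponent by \emph{exactly} one, across all the cases $U=\emptyset$, $s_1>2$, and $s_1=2$. Once the explicit computation of Theorem~\ref{gpcfs} is in hand this is routine, and conditions (2)--(4) then follow cleanly from the squarefree-term argument, from $f_{1,2}\in J_G$, and from the equality $J_G^{(t)}=J_G^{t}$ of Theorem~\ref{pc}, respectively.
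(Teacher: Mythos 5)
Your proposal is correct and follows essentially the same route as the paper: the same choice of $f$, $c=f_{1,2}$, and $q=f/f_{1,2}$, the same reuse of the element $g$ from Theorem~\ref{gpcfs} to strip one factor of $f_{1,2}$ for condition (1), the same squarefree-term argument for (2), the same localization $(\mathscr{R}^s(J_G))_{f_{1,2}}=S_{f_{1,2}}[t]$ for (3), and the same appeal to Theorem~\ref{pc} for Noetherianity. The only (harmless) differences are that you work throughout with the labeling of Remark~\ref{label.generalized} rather than the one from Theorem~\ref{pc}, which actually makes the citation of Theorem~\ref{gpcfs} more direct, and you treat the case $U=\emptyset$ explicitly, which the paper leaves implicit.
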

\begin{proof}
Let $G$ be on the vertex set $[n]$. We consider the labeling described in the proof of Theorem~\ref{pc}. Set \( f = y_1 f_{1,2} \cdots f_{n-1,n} x_n \), \( q = f / f_{1,2} \), and $c=f_{1,2}$. We show that these elements satisfy the conditions of Lemma~\ref{sfrlemma}. 
\begin{enumerate}
\item Suppose that $\mathfrak{p}_U$ be a minimal prime of $J_G$ associated with $U=\{1<s_1<\cdots<s_{m}<n\}\subseteq V(P)$. Assume $T$, $g_T$, and $g$ are as determined in the proof of Theorem~\ref{gpcfs}. In particular,
\[
g=( \prod_{i=1}^{m} f_{s_{i}-1,s_{i}}f_{s_{i},s_{i}+1})(\prod_{T\in\mathfrak{T}}g_T).
\]
By the argument we have in that proof, if $s_1=2$, 
\[
( \prod_{i=2}^{m} f_{s_{i}-1,s_{i}}f_{s_{i},s_{i}+1})(\prod_{T\in\mathfrak{T}}g_T)  \in \mathfrak{p}_U^{h-2}.
\]
On the other hand,
\[
f_{2,3}\in (x_i,y_i| i\in U)\subseteq \mathfrak{p}_U;
\]
See Remark~\ref{prime.description}. Hence $g/f_{1,2}\in \mathfrak{p}_U^{h-1}$. Thus $f/f_{1,2}\in \mathfrak{p}_U^{h-1}$.

Next, suppose that $s_1>2$. Then $f_{1,2}$ divides $g_T$ for $T=[1,s_1-1]$, and $g/f_{1,2}\in \mathfrak{p}_U^{h-1}$. Hence $f/f_{1,2}\in \mathfrak{p}_U^{h-1}$.

\item \( f^{p-1} \notin \mathfrak{m}^{[p]}\) as we have seen in proof of Theorem~\ref{gpcfs}.

\item By labeling of $G$, $\{1,2\}\in E(G)$. So $f_{1,2}\in J_G$. This implies that ${J_{G}}_{f_{1,2}}=S_{f_{1,2}}$. Thus
\( {\mathscr{R}^{s}(J_G)}_{f_{1,2}}=S_{f_{1,2}}[t] \). The regular ring $S$ is strongly $F$-regular, and as a result, $S_{f_{1,2}}$ is  also strongly $F$-regular; see \cite[Theorem 3.1]{hochster1989tight}. We deduce that \( {\mathscr{R}^{s}(J_G)}_{f_{1,2}}=S_{f_{1,2}}[t] \) is strongly $F$-regular.

\item Since the symbolic and ordinary powers of $J_G$ coincide by Theorem~\ref{pc}, symbolic Rees algebra and Rees algebra of $J_G$ also coincide. Recall that Rees algebra $S[J_Gt]$ of $J_G$ is Noetherian.  

\medskip

Now by Lemma \ref{sfrlemma}, we conclude that  \( \mathscr{R}^{s}(J_G) \) is strongly \(F\)-regular.
\end{enumerate}

\end{proof}

\begin{proposition}\label{gc.f-regular}
Let \( G \) be a net-free generalized caterpillar graph. Then \( \mathscr{R}^{s}(J_G) \) is strongly \(F\)-regular.
\end{proposition}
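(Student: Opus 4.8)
The plan is to apply Lemma~\ref{sfrlemma} with exactly the data used in Proposition~\ref{pendant.f-regular}, that is $f = y_1 f_{1,2}\cdots f_{n-1,n} x_n$, $q = f/f_{1,2}$, and $c = f_{1,2}$, after first placing $G$ inside the generalized pendant cliques framework. The crucial preliminary observation is that a net-free generalized caterpillar graph is itself a generalized pendant cliques graph: this is precisely where the net-free hypothesis is used, as it rules out the branched triangles that would otherwise obstruct reading the graph as a succession of vertex and edge clique joins along a central path. Granting this identification, I would equip $G$ with the labeling of Remark~\ref{label.generalized} and reuse verbatim the description of the minimal primes $\mathfrak{p}_U$, the connected-component count of Lemma~\ref{number.cc}, and the height computation carried out in the proof of Theorem~\ref{gpcfs}.

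With the labeling fixed, I would verify the four hypotheses of Lemma~\ref{sfrlemma} in turn. For hypothesis (1), the proof of Theorem~\ref{gpcfs} already produces a divisor $g \mid f$ with $g \in \mathfrak{p}_U^{h}$ for each minimal prime $\mathfrak{p}_U$, where $h = \operatorname{ht}(\mathfrak{p}_U)$; splitting off a single factor $f_{1,2}$ exactly as in Proposition~\ref{pendant.f-regular} (absorbing it into $(x_s, y_s \mid s \in U)$ when $s_1 = 2$, and into the complete-graph factor on $[1, s_1 - 1]$ when $s_1 > 2$) shows $q = f / f_{1,2} \in \mathfrak{p}_U^{h-1}$ for every minimal prime, hence $q \in \bigcap_i \mathfrak{q}_i^{h_i - 1}$. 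Hypothesis (2) holds because $f$ has the squarefree term $y_1 x_1 y_2 x_2 \cdots x_{n-1} y_n x_n$, and no element of $\mathfrak{m}^{[p]}$ divides its $(p-1)$st power. For hypothesis (3), the labeling gives $\{1,2\} \in E(G)$, so $f_{1,2} \in J_G$, whence $(J_G)_{f_{1,2}} = S_{f_{1,2}}$ and $(\mathscr{R}^s(J_G))_{f_{1,2}} = S_{f_{1,2}}[t]$, a localization of the regular ring $S$, which is strongly $F$-regular by \cite[Theorem 3.1]{hochster1989tight}.

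Hypothesis (4), the Noetherianity of $\mathscr{R}^s(J_G)$, is the one place where this proof departs from the generalized pendant cliques computation, since equality of symbolic and ordinary powers is not available for arbitrary generalized pendant cliques graphs and so Theorem~\ref{pc} cannot be invoked here. Instead I would appeal to \cite[Theorem 3.11]{jahani2023equality}, which yields $J_G^{(t)} = J_G^{t}$ for net-free generalized caterpillar graphs; this identifies $\mathscr{R}^s(J_G)$ with the ordinary Rees algebra $S[J_G t]$, which is Noetherian. Lemma~\ref{sfrlemma} then gives the conclusion.

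I expect the main obstacle to be the opening structural step, the realization of a net-free generalized caterpillar graph as a generalized pendant cliques graph, because it is what licenses transporting the minimal-prime and height bookkeeping of Theorem~\ref{gpcfs} to the present setting; once this is in place, hypotheses (1)--(3) are transcriptions of the pendant cliques argument and hypothesis (4) is a direct appeal to the known equality of powers for this class.
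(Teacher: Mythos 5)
Your proposal is correct and follows essentially the same route as the paper: both equip $G$ with the labeling of Remark~\ref{label.generalized} (viewing the net-free generalized caterpillar graph inside the generalized pendant cliques framework), take $f = y_1 f_{1,2}\cdots f_{n-1,n}x_n$, $q = f/f_{1,2}$, $c = f_{1,2}$, verify conditions (1)--(3) of Lemma~\ref{sfrlemma} by the argument of Proposition~\ref{pendant.f-regular}, and obtain the Noetherianity in condition (4) from the equality $J_G^{(t)} = J_G^{t}$ of \cite[Theorem 3.11]{jahani2023equality}. No substantive differences.
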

\begin{proof}
Let $G$ be on the vertex set $[n]$. We consider the labeling described in Remark~\ref{label.generalized}. One can see that in the case of   net-free generalized caterpillar graphs, this labeling is exactly that one which is used in \cite[Lemma 3.10, Theorem 3.11]{jahani2023equality} to show that symboilc powers and ordinary powers of net-free generalized caterpillar graphs coincide. Hence symbolic Rees algebra and Rees algebra of $J_G$ also coincide and \( \mathscr{R}^{s}(J_G) \) is Noetherian.

Set \( f = y_1 f_{1,2} \cdots f_{n-1,n} x_n \), \( q = f / f_{1,2} \), and $c=f_{1,2}$. One can see that these elements satisfy  conditions 1-3 of Lemma~\ref{sfrlemma} by applying the same argument used in the proof of Proposition~\ref{pendant.f-regular}.
\end{proof}

\newpage
{}
\Addresses
\end{document}